\theoremstyle{plain}
\newtheorem{theorem}{Theorem}[section]
\newtheorem{lemma}{Lemma}[section]
\newtheorem{corollary}{Corollary}[section]
\newtheorem{remark}{Remark}[section]
\newtheorem{definition}{Definition}[section]
\begin{document}
	
	\begin{center}
		{\bf Duality for coalescing stochastic flows on the real line}
		
		\vskip20pt

		Georgii V. Riabov
		
		\vskip20pt	
		
		Institute of Mathematics, NAS of Ukraine

			\end{center}
		
		\vskip20pt

		{\small {\bf Abstract.}  For a class of coalescing stochastic flows on the real line the existence of dual flows is proved. A stochastic flow and its dual are constructed as a forward and backward perfect cocycles over the same metric dynamical system. The metric dynamical system itself is defined on a new state space for coalescing flows.  General results are applied to Arratia flows with drift.}

\section{Introduction}

In the present work we study duality for coalescing stochastic flows on the real line from the persepective of random dynamical systems. By a flow on $\mathbb{R}$ we understand a family $\{\psi_{s,t}:-\infty<s\leq t<\infty\}$ of mappings of $\mathbb{R},$ $\psi_{s,t}:\mathbb{R}\to\mathbb{R},$  that are related by the evolutionary property:

for all $r\leq s\leq t,$ $x\in\mathbb{R},$
\begin{equation}
\label{eq15_1}
\psi_{s,t}(\psi_{r,s}(x))=\psi_{r,t}(x) \mbox{ and } \psi_{s,s}(x)=x.
\end{equation}
Respectively, a stochastic flow on $\mathbb{R}$ is a family $\{\psi_{s,t}:-\infty<s\leq t<\infty\}$ of random mappings of $\mathbb{R},$ $\psi_{s,t}:\Omega\times\mathbb{R}\to\mathbb{R},$ that satisfy the evolutionary property \eqref{eq15_1} without exceptions in $\omega,$ are homogeneous (i.e. distributions of random vectors $(\psi_{s,t}(x_1),\ldots,\psi_{s,t}(x_n))$ and $(\psi_{s+h,t+h}(x_1),\ldots,\psi_{s+h,t+h}(x_n))$ coincide), and possess independent increments (i.e. for all $t_1\leq t_2\leq \ldots\leq t_n$ random mappings $\psi_{t_1,t_2},\ldots,\psi_{t_{n-1},t_n}$ are independent, see section 2 for precise definitions).  We consider only flows with continuous trajectories, i.e. for each $(s,x)\in\mathbb{R}^2$ the function $t\to \psi_{s,t}(x)$ is continuous on $[s,\infty).$ 

Given a flow $\psi$ on $\mathbb{R},$ its dual is a flow on $\mathbb{R}$ that evolves backwards in time never crossing trajectories of $\psi.$ Formally, a backward flow on $\mathbb{R}$ is a family $\{\tilde{\psi}_{t,s}:-\infty<s\leq t<\infty\}$ of mappings of $\mathbb{R},$ $\tilde{\psi}_{t,s}:\mathbb{R}\to\mathbb{R},$ that are related by the backward evolutionary property:

for all $r\leq s\leq t,$ $y\in\mathbb{R},$
\begin{equation}
\label{eq15_2}
\tilde{\psi}_{s,r}(\tilde{\psi}_{t,s}(y))=\tilde{\psi}_{t,r}(y) \mbox{ and } \tilde{\psi}_{s,s}(y)=y.
\end{equation}
Again, we assume that  functions $s\to \tilde{\psi}_{t,s}(y)$ are continuous on $(-\infty,t]$ for all $(t,y)\in\mathbb{R}^2.$

\begin{definition}
\label{def15_1} \cite{Arratia1, Arratia2} Backward flow $\tilde{\psi}$ is dual to the flow $\psi,$ if for all $s\leq t,$ $x,y\in\mathbb{R}$
\begin{equation}
\label{eq15_3}
(\psi_{s,t}(x)-y)(x-\tilde{\psi}_{t,s}(y))\geq 0.
\end{equation}
\end{definition}
If $\psi$ is a stochastic flow on $\mathbb{R},$ then a dual flow is a backward stochastic flow $\tilde{\psi}$ that satisfies \eqref{eq15_3} without exceptions in $\omega$ (see section 2 for the rigorous definition of a backward stochastic flow). 

Following \cite{LeJanRaimond},  a stochastic flow $\psi$ is called coalescing if for some distinct $x,y\in\mathbb{R}$
$$
\mathbb{P}(\exists t>0: \ \psi_{s,t}(x)=\psi_{s,t}(y))>0.
$$
The flows we study possess stronger property: with probability $1$ for all $s<t$ images $\psi_{s,t}(\mathbb{R})$ are locally finite subsets of $\mathbb{R}.$  In other words, for $s<t$ mappings $x\to\psi_{s,t}(x)$ are random step functions. This constrasts the well-known case of stochastic flows of homeomorphisms treated in \cite{Kunita}. For example, consider an It\^o's stochastic differential equation
\begin{equation}
\label{eq15_4}
dX(t)=a(X(t))dt+b(X(t))dw(t),
\end{equation}
where $w$ is a Wiener process and coefficients $a,b$ are globally Lipschitz. The equation \eqref{eq15_4} can be solved simultaneously for all starting points $(s,x)\in\mathbb{R}^2$ giving rise to a stochastic flow $\{\psi_{s,t}:-\infty<s\leq t<\infty\}$ of homeomorphisms of $\mathbb{R}$ \cite[Section 4]{Kunita2}. In this case the dual flow is $\tilde{\psi}_{t,s}=\psi^{-1}_{s,t}.$ Its properties are described in detail in \cite[Ch. 4]{Kunita}. 

One of the most known and studied examples of a coalescing stochastic flow on $\mathbb{R}$ is the Arratia flow. It describes a motion of a continuum family of  Wiener processes that start from every time-space point $(s,x)\in\mathbb{R}^2,$ move independently before meeting and coalesce at a meeting time. In \cite{Arratia2} the existence of a corresponding stochastic flow $\{\psi_{s,t}:-\infty<s\leq t<\infty\}$ was proved (see \cite{Darling, TW, LeJanRaimond, FINR, SSS,  NT, BGS, Riabov} for a number of modifications and generalizations). One consequence of independent motion before meeting time is that with probability 1 for any $s<t$ and $a<b$ the set $\psi_{s,t}([a,b])$ is finite. Duality for the Arratia flow was also developed in \cite{Arratia2}. Mappings $x\to\psi_{s,t}(\omega,x)$  are not invertible, but there are two natural candidates for a dual flow:

\begin{itemize}
\item a family of right-continuous generalized inverses
$$
v^+_{t,s}(y)=\inf\{x\in\mathbb{R}:\psi_{s,t}(x)>y\};
$$

\item a family of left-continuous generalized inverses
$$
v^-_{t,s}(y)=\inf\{x\in\mathbb{R}:\psi_{s,t}(x)\geq y\}.
$$

\end{itemize}
For fixed $t\in\mathbb{R}$ and $y_1,\ldots,y_n\in\mathbb{R}$ processes $s\to (v^+_{t,s}(y_1),\ldots,v^+_{t,s}(y_n))$  and $s\to (v^-_{t,s}(y_1),\ldots,v^-_{t,s}(y_n))$ are coalescing Wiener processes that move (backwards) independently before the meeting time. However, neither $v^+$ nor $v^-$ is a backward stochastic flow - with probability 1 the property \eqref{eq15_2} fails for each fo them \cite{Arratia2}. It was suggested in \cite{Arratia2} that a proper choice between $v^+$ and $v^-$ gives rise to a backward flow dual to $\psi.$ We generalize and prove this statement in section 3. Thus, dual flow to the Arratia flow exists and is the Arratia flow. Despite the flow property for duals of coalescing stochastic flows on $\mathbb{R}$ wasn't study in general,  the generalize inverses $v^+$ and $v^-$ of the Arratia flow were successfully applied in \cite{Arratia2,Harris,TW,FINR,DK, DKG,DRS}. In this paper we fill the gap with the evolutionary property of dual flows for a class of coalescing stochastic flows on $\mathbb{R}$ (see sections 2 and 4 for the conditions we impose on a stochastic  flow).

Another novelty of our work is the description of a dual flow as a random dynamical system in the sense of \cite{Arnold}. Consider a probability space $(\mathbb{F},\mathcal{A},\mu)$ equipped with a measurable group $(\theta_h)_{h\in\mathbb{R}}$ of measure-preserving transformations of $\mathbb{F},$ and a perfect cocylce $\varphi$ over $\theta$ -- a measurable mapping 
$\varphi:[0,\infty)\times\Omega\times\mathbb{R}\to\mathbb{R}$ such that for all $s,t\geq 0,$ $\omega\in\Omega,$ $x\in\mathbb{R},$
\begin{equation}
\label{eq15_5}
\varphi(t+s,\omega,x)=\varphi(t,\theta_s\omega,\varphi(s,\omega,x)) \mbox{ and } \varphi(0,\omega,x)=x.
\end{equation}
The perfect cocycle property \eqref{eq15_5} immediately implies that for all $\omega\in\Omega$
$$
\psi_{s,t}(\omega,x)=\varphi(t-s,\theta_s\omega,x)
$$
is a flow of mappings of $\mathbb{R}.$ In \cite{Riabov} general conditions on a coalescing stochastic flow $\psi$ were formulated under which $\psi$ is generated by a random dynamical system in the described way. The representation of a flow via a random dynamical system endows a flow with a richer structure that allows to develop ergodic theory \cite{Arnold}. For example, in \cite{DRS} random dynamical systems were applied to study stationary points and invariant measures for Arratia flows with drift.   It is a natural question then whether a dual flow is generated by a random dynamical system.

In our main result (theorem \ref{thm1}) we give conditions on a coalescing stochastic flow $\psi$ under which both the flow  and its dual are generated by random dynamical systems. Namely, starting from finite-point motions of $\psi$ on a certain probability space $(\mathbb{F},\mathcal{A},\mu)$ with a measurable group of measure preserving transformations $(\theta_h)_{h\in\mathbb{R}}$ we construct a perfect cocycle $\varphi$ and a backward perfect cocycle $\tilde{\varphi}$ such that $\psi_{s,t}(\omega,x)=\varphi(t-s,\theta_s\omega,x)$ is a stochastic flow on $\mathbb{R}$ with prescribed finite-point motions and $\tilde{\psi}_{t,s}(\omega,x)=\tilde{\varphi}(t-s,\theta_s\omega,x)$ is a backward stochastic flow dual to $\psi.$ 

In section 2 we collect all the necessary definitions and formulate the main theorem. Section 3 is devoted to the construction of a measurable space $(\mathbb{F},\mathcal{A})$ together with a measurable group of transformations $(\theta_h)_{h\in\mathbb{R}}$ and two perfect cocycles $\varphi$ and $\tilde{\varphi}$ that generate dual flows. The space $\mathbb{F}$ is actually a specific space of flows $\omega=\{\omega_{s,t}:-\infty<s\leq t<\infty\},$ $\theta_h$ being a time shift: $(\theta_h\omega)_{s,t}=\omega_{s+h,t+h}.$ The dual flow is constructed as a measurable functional on $\mathbb{F}$ which can be of independent interest. In section 4 we define a probability measure $\mu$  on $(\mathbb{F},\mathcal{A})$ that is $\theta_h-$invariant and is such that on the space $(\mathbb{F},\mathcal{A},\mu)$ the canonical flow $\psi_{s,t}(\omega,x)=\omega_{s,t}(x)$ is the needed stochastic flow. By construction, the flow  $\psi$ is generated by a random dynamical system $\varphi$ and the flow $\tilde{\psi}$ is generated by a backward random dynamical system $\tilde{\varphi}.$ The distribution of $\tilde{\psi}$ is described in section 5. We prove that $\tilde{\psi}$ is a backward stochastic flow on $\mathbb{R}$ and characterize its finite-point motions.  Finally, in section 6 we apply the theory to the Arratia flows with drift. We show that the dual flow exists and is the Arratia flow with drift. This recovers and strengthes results of \cite{TW, DRS}.

\section{Preliminaries and the main result}

We will consider sets $\mathbb{R}_+= [0,\infty),$ $\mathcal{H}=\{(s,t)\in\mathbb{R}^2:s\leq t\}.$  The complement of the set $A$ is denoted by $A^c.$ Integration with respect to the probability measure $\mu$ will be denoted by $\mathbb{E}_\mu.$ The Borel $\sigma-$field on a  metric spaces $X$ will be denoted by $\mathcal{B}(X).$ By $C_0(\mathbb{R}^n)$ we denote the space of continuous functions $f:\mathbb{R}\to\mathbb{R}$ such that $\lim_{|x|\to\infty}f(x)=0.$

Below we formulate few important results on stochastic flows following mainly  \cite{LeJanRaimond}. 

The distribution of a stochastic flow is determined by its finite-point motions. Let $\{P^{(n)}:n\geq 1\}$ be a sequence of transition probabilities satisfying following three conditions.

\begin{itemize}
\item {\bf (TP1)} $P^{(n)}=\{P^{(n)}_t:t\geq 0\}$ is a Feller transition probability on $(\mathbb{R}^n,\mathcal{B}(\mathbb{R}^n)).$ 

\item {\bf (TP2)} Given $\{i_1,\ldots,i_k\}\subset \{1,\ldots,n\}$ let $\pi_{i_1,\ldots,i_k}:\mathbb{R}^n\to\mathbb{R}^k$ be a projection, $\pi_{i_1,\ldots,i_k}(x)=(x_{i_1},\ldots,x_{i_k}).$ Then for all $t\geq 0,$ $x\in\mathbb{R}^n$ and $C\in\mathcal{B}(\mathbb{R}^k),$
$$
P^{(n)}_t(x,\pi^{-1}_{i_1,\ldots,i_k}C)=P^{(k)}_t(\pi_{i_1,\ldots,i_k}x,C).
$$

\item {\bf (TP3)} Let $\Delta=\{(y,y):y\in\mathbb{R}\}$  be a diagonal in $\mathbb{R}^2.$ Then for all $t\geq 0$ and $x\in \Delta$
$$
P^{(2)}_t(x,\Delta)=1.
$$

\end{itemize}

When conditions {\bf (TP1)-(TP3)} are satisfied the sequence $\{P^{(n)}:n\geq 1\}$ will be called a compatible sequence of coalescing Feller transition probabilities on $\mathbb{R}$. It will describe finite-point motions of a stochastic flow. We use the definition of a stochastic flow from \cite{Riabov}. In slightly different forms it appeared in \cite{Darling, LeJanRaimond}.

\begin{definition}
\label{def15_2} Let $\{P^{(n)}:n\geq 1\}$ be a compatible sequence of coalescing Feller transition probabilities on $\mathbb{R}$. A stochastic flow on $\mathbb{R}$ with finite-point motions determined by $\{P^{(n)}:n\geq 1\}$ is a family $\{\psi_{s,t}(x):-\infty<s\leq t<\infty,x\in\mathbb{R}\}$ of random variables (defined on a probability space $(\Omega,\mathcal{F},\mathbb{P})$), such that 

\begin{itemize}
\item {\bf (SF1)} the mapping $(s,t,\omega,x)\to \psi_{s,t}(\omega,x)$ is $\mathcal{B}(\mathcal{H})\otimes \mathcal{F}\otimes \mathcal{B}(\mathbb{R})/\mathcal{B}(\mathbb{R})$-measurable;

\item {\bf (SF2)} for all $s\leq t,$ $x\in\mathbb{R},$ $\omega\in\Omega,$ 
$$
\psi_{s,t}(\omega,\psi_{r,s}(\omega,x))=\psi_{r,t}(\omega,x) \mbox{ and } \psi_{s,s}(\omega,x)=x;
$$

\item {\bf (SF3)} given $s\in\mathbb{R}$ and a random vector $\xi=(\xi_1,\ldots,\xi_n)$ measurable with respect to the ``past'' $\sigma-$field  $\mathcal{F}^\psi_{-\infty,s}=\sigma(\{\psi_{u,v}(x):u\leq v\leq s, x\in\mathbb{R}\}),$ for all $t\geq s$ and $B\in\mathcal{B}(\mathbb{R}^n)$
$$
\mathbb{P}((\psi_{s,t}(\xi_1),\ldots,\psi_{s,t}(\xi_n))\in B|\mathcal{F}^\psi_{-\infty,s})=P^{(n)}_{t-s}(\xi,B) \mbox{ a.s.}
$$

\end{itemize}

\end{definition}

\begin{remark} 
\label{rem15_1} The property {\bf (SF3)} implies homogeneity and independence of increments: for fixed $x\in \mathbb{R}^n$ the law of $(\psi_{s,t}(x_1),\ldots,\psi_{s,t}(x_n))$ is $P^{(n)}_{t-s}(x,\cdot);$ for $t_1\leq t_2\leq \ldots\leq t_n$ mappings $\psi_{t_1,t_2},\ldots,\psi_{t_{n-1},t_n}$ are independent. See \cite{Riabov} for an example showing that {\bf (SF3)}  is stronger than these two properties. Also, in \cite{Riabov} it is proved that finite-dimensional distributions of the flow $\psi$ are uniquely determined by properties {\bf (SF1)-(SF3)}.

\end{remark}

\begin{definition}
\label{def15_3} Let $\{P^{(n)}:n\geq 1\}$ be a compatible sequence of coalescing Feller transition probabilities on $\mathbb{R}$. A family $\{\tilde{\psi}_{t,s}(y):-\infty<s\leq t<\infty,y\in\mathbb{R}\}$ of random variables is a backward stochastic flow on $\mathbb{R}$ with finite-point motions determined by $\{P^{(n)}:n\geq 1\}$ if  the family $\{\psi_{s,t}(x):-\infty<s\leq t<\infty\}$ defined by $\psi_{s,t}(x)=\tilde{\psi}_{-s,-t}(x)$ is a stochastic flow on $\mathbb{R}$ with finite-point motions determined by $\{P^{(n)}:n\geq 1\}.$

\end{definition}

If $\psi$ is a stochastic flow on $\mathbb{R},$ then for every $\omega$ the family of mappings $\{\psi_{s,t}(\omega,\cdot):-\infty<s\leq t<\infty\}$ is a flow on $\mathbb{R}$, i.e. \eqref{eq15_1} holds.  If $\tilde{\psi}$ is a backward stochastic flow on $\mathbb{R},$ then for every $\omega$ the family of mappings $\{\tilde{\psi}_{t,s}(\omega,\cdot):-\infty<s\leq t<\infty\}$ is a backward flow on $\mathbb{R},$  i.e. \eqref{eq15_2} holds. Assume that a stochastic flow $\psi$ and a backward stochastic flow $\tilde{\psi}$ on $\mathbb{R}$ are defined on a single probability space. We will say that $\tilde{\psi}$ is dual to $\psi,$ if for every $\omega$ the backward flow $\{\tilde{\psi}_{t,s}(\omega,\cdot):-\infty<s\leq t<\infty\}$ is dual to the flow $\{\psi_{s,t}(\omega,\cdot):-\infty<s\leq t<\infty\}$ in the sense of definition \ref{def15_1}.

To construct stochastic flows and their duals we use a convenient framework of random dynamical systems. We briefly recall the main notions and relations with stochastic flows. For an account of the topic we refer to \cite{Arnold}.

\begin{definition}
\label{def15_4} A metric dynamical system is a probability space $(\Omega,\mathcal{F},\mathbb{P})$ equipped with a measurable group of measure preserving transformations $(\theta_h)_{h\in\mathbb{R}}.$ That is, the mapping
$$
(\omega,h)\to\theta_h\omega
$$
is $\mathcal{F}\otimes \mathcal{B}(\mathbb{R})/\mathcal{F}$-measurable; for all $s,h\in\mathbb{R}$ and $\omega\in\Omega,$
$$
\theta_{s+h}\omega=\theta_s\theta_h\omega \mbox{ and } \theta_0\omega=\omega;
$$
for all $h\in\mathbb{R},$ $\mathbb{P}\circ\theta^{-1}_h=\mathbb{P}.$

\end{definition}

\begin{definition}
\label{def15_5} Let $(\Omega,\mathcal{F},\mathbb{P},(\theta_h)_{h\in\mathbb{R}})$ be a  metric dynamical system. A perfect cocycle over $\theta$ is an $\mathcal{B}(\mathbb{R}_+)\otimes \mathcal{F}\otimes \mathcal{B}(\mathbb{R})/\mathcal{B}(\mathbb{R})$-measurable mapping 
$$
\varphi:\mathbb{R}_+\times\Omega\times\mathbb{R}\to\mathbb{R},
$$
such that for all $s,t\geq 0,$ $x\in\mathbb{R},$ $\omega\in\Omega,$
$$
\varphi(t+s,\omega,x)=\varphi(t,\theta_s\omega,\varphi(s,\omega,x)) \mbox{ and } \varphi(0,\omega,x)=x.
$$
A backward perfect cocycle over $\theta$ is an $\mathcal{B}(\mathbb{R}_+)\otimes \mathcal{F}\otimes \mathcal{B}(\mathbb{R})/\mathcal{B}(\mathbb{R})$-measurable mapping 
$$
\tilde{\varphi}:\mathbb{R}_+\times\Omega\times\mathbb{R}\to\mathbb{R},
$$
such that for all $s,t\geq 0,$ $x\in\mathbb{R},$ $\omega\in\Omega,$
$$
\tilde{\varphi}(t+s,\omega,x)=\tilde{\varphi}(t,\omega,\tilde{\varphi}(s,\theta_t\omega,x)) \mbox{ and } \tilde{\varphi}(0,\omega,x)=x.
$$

\end{definition}

Given a perfect cocycle $\varphi$ over $\theta$ it is immediate that the relation
$$
\psi_{s,t}(\omega,x)=\varphi(t-s,\theta_s\omega,x)
$$
defines $\omega-$wisely a flow of mappings of $\mathbb{R},$ and the mapping $(s,t,\omega,x)\to \psi_{s,t}(\omega,x)$ is jointly measurable. Thus, to prove that $\psi$ is a stochastic flow on $\mathbb{R}$ one has to check the property {\bf (SF3)} with some compatible sequence of coalescing Feller transition pro\-ba\-bi\-li\-ties. Same observation is applicable to the backward cocycle $\tilde{\varphi}$ and a backward flow of mappings 
$$
\tilde{\psi}_{t,s}(\omega,x)=\tilde{\varphi}(t-s,\theta_s\omega,x).
$$
 
To formulate the result we add more assumptions on finite-point motions of the stochastic flow $\psi.$ The assumptions contain those of \cite[Th. 1.1]{Riabov}, with the  new assumption {\bf (TP7)} on meeting times for three-point motions. We prove that under this new assumption it is possible to construct both the flow $\psi$ and its dual.

\begin{itemize}
\item {\bf (TP4)} For all $t>0,$ $x,y\in\mathbb{R}$
$$
P^{(1)}_t(x,\{y\})=0.
$$

\item {\bf (TP5)} For all real $a<b$ and $\varepsilon>0$
$$
\lim_{t\to 0}t^{-1}\sup_{x\in[a,b]}P^{(1)}_t(x,(x-\varepsilon,x+\varepsilon)^c)=0
$$

\begin{remark}
\label{rem16_1}
Under the condition {\bf (TP5)} the Feller process corresponding to $P^{(1)}$ has a.s. continuous trajectories \cite[Ch. 4, Prop. 2.9]{EK}. We denote by $\mathbb{P}^{(n)}_x$ the distribution in $C([0,\infty),\mathbb{R}^n)$ of a Feller process with transition probability $P^{(n)}$ and a starting point $x.$ The canonical process on $C([0,\infty),\mathbb{R}^n)$ will be denoted by $\{X^{(n)}(t)=(X^{(n)}_1(t),\ldots,X^{(n)}_n(t)):t\geq 0\},$ so that for all $0<t_1<\ldots<t_k,$ $x\in\mathbb{R}^n$ and $A_1,\ldots,A_k\in\mathcal{B}(\mathbb{R}^n)$
$$
\mathbb{P}^{(n)}_x (X^{(n)}(t_1)\in A_1,\ldots, X^{(n)}(t_k)\in A_k)=
$$
$$
=\int_{A_1}\ldots \int_{A_{k-1}}P^{(n)}_{t_k-t_{k-1}}(u_{k-1},A_k)P^{(n)}_{t_{k-1}-t_{k-2}}(u_{k-2},du_{k-1})\ldots P^{(n)}_{t_1}(x,du_1).
$$
\end{remark}

\item {\bf (TP6)} Given reals $a<b$ and $t>0$ there exists an increasing continuous function $m_{a,b,t}:\mathbb{R}\to\mathbb{R}$ such that for all $x_1,x_2$ with  $a\leq x_1<x_2\leq b,$
$$
\mathbb{P}^{(2)}_{(x_1,x_2)}(\forall s\in [0,t] \ a\leq X^{(2)}_1(s)<X^{(2)}_2(s)\leq b)\leq m_{a,b,t}(x_2)-m_{a,b,t}(x_1).
$$

\item {\bf (TP7)} Given reals $a<b$ and $t>0$ there exists a positive function $f_{a,b,t}:(0,\infty)\to(0,\infty)$ such that for all $x_1,x_2,x_3$ with  $a\leq x_1<x_2<x_3\leq b,$
$$
\mathbb{P}^{(3)}_{(x_1,x_2,x_3)}(\forall s\in [0,t] \ a\leq X^{(3)}_1(s)<X^{(3)}_2(s)<X^{(3)}_3(s)\leq b)\leq f_{a,b,t}(x_3-x_1).$$

\end{itemize}

Let 
\begin{equation}
\label{eq08_1}
w_{a,b}(\varepsilon,\delta)=\inf\{t>0: \sup_{x\in[a,b]}P^{(1)}_t(x,(x-\varepsilon,x+\varepsilon)^c)\geq \delta t\}.
\end{equation}
The following is the main theorem of the paper.

\begin{theorem}
\label{thm1} Let $\{P^{(n)}:n\geq 1\}$ be a compatible sequence of coalescing Feller  transition probabilities on $\mathbb{R}$ satisfying conditions {\bf (TP1)-(TP7)}.
 Assume that for any reals $a<b$ and $t>0$
\begin{equation}
\label{eq21_3}
 \liminf_{\varepsilon,\delta\to 0}\frac{f_{a,b,t}(8\varepsilon)}{w_{a,b}(\varepsilon,\delta)}=0.
\end{equation}
Then there exists a metric dynamical system $(\mathbb{F},\mathcal{A},\mu,(\theta_h)_{h\in\mathbb{R}}),$ a perfect cocycle $\varphi$ over $\theta$ and a backward perfect cocycle $\tilde{\varphi}$ over $\theta,$ such that

\begin{enumerate}
\item the flow $\psi_{s,t}(\omega,x)=\varphi(t-s,\theta_s\omega,x)$ is a stochastic flow on $\mathbb{R}$ with finite-point motions determined by $\{P^{(n)}:n\geq 1\};$

\item the backward flow $\tilde{\psi}_{t,s}(\omega,x)=\tilde{\varphi}(t-s,\theta_s\omega,x)$ is a backward stochastic flow on $\mathbb{R};$

\item the backward stochastic flow $\tilde{\psi}$ is dual to the stochastic flow $\psi.$

\end{enumerate}

Moreover, the finite-point motions  of $\tilde{\psi}$ are determined by a sequence $\{\tilde{P}^{(n)}:n\geq 1\}$ which is a unique compatible  sequence of coalescing Feller transition probabilities on $\mathbb{R}$ that satisfy the duality relation
$$
\tilde{P}^{(n)}(y,(x_1,x_2)\times(x_2,x_3)\times\ldots\times (x_n,\infty))=
$$
$$
=P^{(n)}(x,(-\infty,y_1)\times(y_1,y_2)\times\ldots\times (y_{n-1},y_n))
$$
for all $n\geq 1,$ $t\geq 0$ and  $x,y\in\mathbb{R}^n$ such that $x_1<y_1<x_2<y_2<\ldots<x_n<y_n.$

\end{theorem}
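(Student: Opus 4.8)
The plan is to build all the structure deterministically on a suitable space of flows and only afterwards to equip it with a probability measure. For the state space I would take $\mathbb{F}$ to be the set of all families $\omega=\{\omega_{s,t}:(s,t)\in\mathcal{H}\}$ in which each $\omega_{s,t}:\mathbb{R}\to\mathbb{R}$ is non-decreasing and right-continuous with $\omega_{s,t}(\mathbb{R})$ locally finite whenever $s<t$, the evolutionary property \eqref{eq15_1} holds without exceptions, and $t\mapsto\omega_{s,t}(x)$ is continuous; I would equip $\mathbb{F}$ with the $\sigma$-field $\mathcal{A}$ generated by the evaluation maps $\omega\mapsto\omega_{s,t}(x)$ and with the shift group $(\theta_h\omega)_{s,t}=\omega_{s+h,t+h}$, which is a measurable group of bijections of $\mathbb{F}$. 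On this space the canonical family $\psi_{s,t}(\omega,x)=\omega_{s,t}(x)$ automatically satisfies \textbf{(SF1)} and \textbf{(SF2)}, and $\varphi(u,\omega,x):=\omega_{0,u}(x)$ is a perfect cocycle over $\theta$ with $\psi_{s,t}(\omega,x)=\varphi(t-s,\theta_s\omega,x)$.

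The core of the argument is the construction, for every $\omega\in\mathbb{F}$, of a backward flow $\tilde\psi(\omega)$ dual to $\psi(\omega)$, measurably in $\omega$ and covariantly under the shifts. First note that the duality inequality \eqref{eq15_3} holds for a given $\omega$ if and only if, for all $s\le t$ and $y$, the value $\tilde\psi_{t,s}(\omega,y)$ lies between the two generalized inverses, $\tilde\psi_{t,s}(\omega,y)\in[v^-_{t,s}(\omega,y),v^+_{t,s}(\omega,y)]$; since for each fixed $s$ this interval is degenerate for all but countably many $y$, the only freedom is the choice at the coalescence images of $\omega_{s,t}$. To fix the choice I would use refinement: for the $n$-th dyadic partition of $[s,t]$ let $\tilde\psi^{(n)}_{t,s}(\omega,y)$ be the composition of the right-continuous inverses $v^+$ over the subintervals, and put $\tilde\psi_{t,s}(\omega,y)=\lim_n\tilde\psi^{(n)}_{t,s}(\omega,y)$. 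Granting that this limit exists and is continuous in $s$, it takes values in $[v^-_{t,s},v^+_{t,s}]$ (so \eqref{eq15_3} holds automatically), it is independent of the refining sequence, it satisfies the backward evolutionary property \eqref{eq15_2} (because concatenating partitions makes the approximation procedure consistent), and it depends on $\omega$ only through $\{\omega_{u,v}:s\le u\le v\le t\}$ and commutes with $\theta$; consequently $\tilde\varphi(u,\omega,y):=\tilde\psi_{u,0}(\omega,y)$ is a backward perfect cocycle with $\tilde\psi_{t,s}(\omega,y)=\tilde\varphi(t-s,\theta_s\omega,y)$, and joint measurability \textbf{(SF1)} is inherited from the approximants.

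The main obstacle is precisely the convergence and path-continuity of this approximation, and this is where \textbf{(TP6)}, \textbf{(TP7)} and the quantitative hypothesis \eqref{eq21_3} enter. By \textbf{(TP6)}, as in \cite[Th.~1.1]{Riabov}, the ranges $\omega_{s,t}(\mathbb{R})$ are a.s.\ locally finite, so $\psi_{s,t}(\omega,\cdot)$ are step functions and the inverses $v^\pm$ piecewise constant, while \textbf{(TP4)}, \textbf{(TP5)} give continuity and non-atomicity of the one-point motion. To bound the discrepancy between the $n$-step and the $(n+1)$-step approximants on a compact rectangle $[a,b]\times[0,t]$, uniformly over starting points, one localizes the error at ``triple configurations'': space-time locations where the forward trajectory carrying the dual value has two further forward trajectories within distance $\sim\varepsilon$ that have not yet coalesced. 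By \textbf{(TP7)} the probability that such a configuration at spatial scale $\varepsilon$ persists for the time $w_{a,b}(\varepsilon,\delta)$ it takes a one-point motion to move by $\varepsilon$ is of order $f_{a,b,t}(8\varepsilon)$, so the expected number of them in the rectangle is of order $\varepsilon^{-1}f_{a,b,t}(8\varepsilon)/w_{a,b}(\varepsilon,\delta)$, each contributes an error of order $\varepsilon$, and the total error is of order $f_{a,b,t}(8\varepsilon)/w_{a,b}(\varepsilon,\delta)$; by \eqref{eq21_3} this tends to $0$ along a suitable sequence of scales. Hence $(\tilde\psi^{(n)}_{t,s})$ is Cauchy in probability, convergent a.s.\ along a subsequence, with a limit that is continuous in $s$ and does not depend on the scheme. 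I expect the careful bookkeeping here --- matching a refinement discrepancy to a genuine three-point non-coalescence event and summing over scales --- to be the technically heaviest part.

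With the deterministic functionals in hand I would produce the measure as in \cite[Th.~1.1]{Riabov}: under \textbf{(TP1)}--\textbf{(TP6)} there is a $\theta$-invariant probability $\mu$ on $(\mathbb{F},\mathcal{A})$ for which the canonical flow $\psi_{s,t}(\omega,x)=\omega_{s,t}(x)$ has finite-point motions $\{P^{(n)}:n\ge1\}$, and by the previous step $\tilde\psi$ is $\mu$-a.s.\ well defined, which gives assertions (1)--(3) at once. For the final claim I would check that the laws of $(\tilde\psi_{t,s}(\omega,y_1),\dots,\tilde\psi_{t,s}(\omega,y_n))$ form a compatible sequence $\{\tilde P^{(n)}:n\ge1\}$ of coalescing Feller transition probabilities --- the compatibility and the coalescing property being forced by \eqref{eq15_3}, and the Feller property following from that of $\{P^{(n)}\}$ together with \textbf{(TP4)} --- and then that, for strictly interlaced $x_1<y_1<x_2<y_2<\dots<x_n<y_n$, the events $\{\tilde\psi_{t,s}(\omega,y_i)\in(x_i,x_{i+1}),\ 1\le i\le n-1;\ \tilde\psi_{t,s}(\omega,y_n)>x_n\}$ and $\{\omega_{s,t}(x_i)\in(y_{i-1},y_i),\ 1\le i\le n\}$ (with $y_0:=-\infty$) differ by a $\mu$-null set, which follows directly from \eqref{eq15_3} and the non-atomicity \textbf{(TP4)}. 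This is exactly the stated duality relation, and since the probabilities of such interlacing rectangles determine a probability measure on $\mathcal{B}(\mathbb{R}^n)$, it also yields the claimed uniqueness of $\{\tilde P^{(n)}\}$.
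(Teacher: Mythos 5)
Your overall architecture coincides with the paper's: a canonical space of flows with the shift group, a deterministic ($\omega$-wise) construction of the dual, the measure imported from \cite{Riabov}, and the identification of the dual's finite-point motions via the interlacing events $\{\psi_{s,t}(x_1)<y_1<\psi_{s,t}(x_2)<\dots\}$ together with {\bf (TP4)} --- that last step is essentially the paper's Theorem 5.1. However, the central construction of the dual flow is where your proposal has genuine gaps. First, your state space is too rigid: you require every $\omega_{s,t}$ to be right-continuous \emph{and} the evolutionary property to hold without exceptions, but these are incompatible for coalescing flows. At a point $z=\omega_{r,s}(x)$ of the range $\mathcal{R}_s(\omega)$, the value $\omega_{s,t}(z)$ is forced by continuity of the trajectory $t\mapsto\omega_{r,t}(x)$ to be whichever one-sided limit that trajectory follows, and this can be the left limit; this is exactly why the paper replaces global right-continuity by the conditions {\bf (C4)} (one-sided continuity at every point) and {\bf (C5)} (right-continuity off the range). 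Moreover, you have misplaced where the quantitative hypotheses enter: in the paper, {\bf (TP7)} and \eqref{eq21_3} are used precisely to prove that the flow built from $\{P^{(n)}\}$ satisfies {\bf (C4)} almost surely (a triple $\omega_{s,t}(x-)<\omega_{s,t}(x)<\omega_{s,t}(x+)$ forces three trajectories to stay apart, which is controlled by the three-point estimate on the time scale $w_{a,b}(\varepsilon,\delta)$), not to control a partition-refinement scheme.

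Second, your definition of the dual as the limit of compositions of $v^+$ over dyadic partitions is unproven at its two critical points. The convergence argument via counting ``triple configurations'' is a heuristic, not a proof, and even granting convergence the backward evolutionary property does not follow from ``consistency of concatenation'': for $r<s<t$ you must pass to the limit in $\tilde\psi^{(n)}_{s,r}\bigl(\tilde\psi^{(n)}_{t,s}(y)\bigr)$, where $\tilde\psi^{(n)}_{s,r}$ are step functions and the argument converges to a point $\tilde y$ that may be exactly a jump point of the limiting map (i.e.\ a point where $v^-_{s,r}(\omega,\tilde y)<v^+_{s,r}(\omega,\tilde y)$). That degenerate case is the entire difficulty --- it is why Arratia observed that neither $v^+$ nor $v^-$ alone is a flow --- and your scheme gives no control there. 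The paper resolves it by an explicit pointwise selection rule (take $v^+_{t,s}(\omega,y)$ when $(t,y)$ is left regular for $\omega$, i.e.\ $\omega_{t,u}(y)=\omega_{t,u}(y-)$ for all $u\ge t$, and $v^-_{t,s}(\omega,y)$ otherwise) and then verifies the evolutionary property for \emph{every} $\omega\in\mathbb{F}$ by a deterministic case analysis resting on {\bf (C1)--(C5)}. To repair your proof you would either have to carry out the refinement convergence rigorously and separately handle the jump-point compositions, or adopt a pointwise selection of the above type; as written, assertion (2) of the theorem (that $\tilde\psi$ is a backward flow at all) is not established.
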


Next three sections are devoted to the proof. In section 3 we construct the measurable space of flows $(\mathbb{F},\mathcal{A})$ together with a group of shifts $(\theta_h)_{h\in\mathbb{R}}$ and two perfect cocycles $\varphi$ and $\tilde{\varphi}$ that give rise to dual flows $\psi$ and $\tilde{\psi}$. In section 4 we define a measure $\mu$ on $(\mathbb{F},\mathcal{A})$ that makes $(\mathbb{F},\mathcal{A},\mu,(\theta_h)_{h\in\mathbb{R}})$ a metric dynamical system and such that $\psi$ becomes a stochastic flow with prescribed finite-point motions. In section 5 we prove that under the measure $\mu,$ $\tilde{\psi}$ is a backward stochastic flow and characterize its finite-point motions. The construction is applied to the Arratia flow with drift in section 6.

\section{Space of flows $\mathbb{F}$}

In this section we construct a space $\mathbb{F}$ of coalescing flows on $\mathbb{R}$ that carries a metric dynamical system described in the theorem \ref{thm1}. A generic element $\omega\in\mathbb{F}$ is a flow of mappings of $\mathbb{R},$ $\omega=\{\omega_{s,t}:-\infty<s\leq t<\infty\}$  that satisfies properties {\bf (C1)-(C5)} below. We equip $\mathbb{F}$ with a cylindrical $\sigma-$field $\mathcal{A}$ and define a group of shifts $(\theta_h)_{h\in\mathbb{R}},$  a perfect cocycle $\varphi$ and a backward perfect cocycle $\tilde{\varphi}$ over $\theta$ in such a way that mappings 
$$
\psi_{s,t}(\omega,x)=\varphi(t-s,\theta_s\omega,x)
$$
and 
$$
\tilde{\psi}_{t,s}(\omega,x)=\tilde{\varphi}(t-s,\theta_s\omega,x)
$$
are a pair of forward and backward flows in duality (in the sense of the definition \ref{def15_1}).

Let $C_x([s,\infty))$ be the space of all continuous functions $f:[s,\infty)\to\mathbb{R}$ with $f(s)=x.$  We consider the product $\prod_{(s,x)\in\mathbb{R}^2}C_x([s,\infty)).$ An element $\omega\in\prod_{(s,x)\in\mathbb{R}^2}C_x([s,\infty))$ is a collection of functions $t\to\omega_{s,t}(x),$ $t\in[s,\infty),$ indexed by all time-space points $(s,x)\in\mathbb{R}^2.$  We will denote $\omega=\{\omega_{s,t}:-\infty<s\leq t<\infty\}.$

\begin{definition}
\label{def_flow}
The space $\mathbb{F}$ of coalescing stochastic flows is a set of all flows $\omega\in\prod_{(s,x)\in\mathbb{R}^2}C_x([s,\infty))$ that satisfy following five conditions.

\begin{itemize}
\item {\bf (C1)} For all $r\leq s\leq t,$ $x\in\mathbb{R}$ 
$$
\omega_{s,t}(\omega_{r,s}(x))=\omega_{r,t}(x).
$$

\item {\bf (C2)} For all $s<t$ the image $\omega_{s,t}(\mathbb{R})$ is a locally finite subset of $\mathbb{R}$ with 
$$
\sup \omega_{s,t}(\mathbb{R})=\infty, \inf \omega_{s,t}(\mathbb{R})=-\infty.
$$

\item {\bf (C3)} For every $s\in\mathbb{R}$ the set $\mathcal{R}_s(\omega)=\cup_{r<s}\omega_{r,s}(\mathbb{R})$ is dense in $\mathbb{R}.$

\item {\bf (C4)} For all $s\leq t$ and $x\in\mathbb{R}$ the one-sided continuity 
$$
\omega_{s,t}(x)\in\{\omega_{s,t}(x-),\omega_{s,t}(x+)\}
$$
holds.

\item {\bf (C5)} For all $s\leq t$ and $x\not\in\mathcal{R}_s(\omega),$ 
$$
\omega_{s,t}(x)=\omega_{s,t}(x+).
$$

\end{itemize}
\end{definition}

\begin{remark} Each element $\omega\in \mathbb{F}$ is indeed a flow of mappings of $\mathbb{R}:$ evolutionary property is postulated in {\bf (C1)} while the condition $\omega_{s,s}(x)=x$ follows from the inclusion $\omega_{s,\cdot }(x)\in C_x([s,\infty)).$

\end{remark}

\begin{remark}  Condition {\bf (C1)} and continuity of trajectories $t\to \omega_{s,t}(x)$ imply that mappings $x\to \omega_{s,t}(x)$ are increasing. In particular, one-sided limits in {\bf (C4)} and {\bf (C5)} exist.

\end{remark}

\begin{remark}  Because of {\bf (C2)} all sets $\mathcal{R}_s(\omega)$ are countable.

\end{remark}

\begin{remark} Definition \ref{def_flow} is similar to \cite[Def. ]{Riabov}. Below we will show that {\bf (C1)-(C5)} actually imply conditions  from \cite[Def. 2.1]{Riabov}, so that $\mathbb{F}$  is a subset of the space $\mathbb{F}$ from \cite{Riabov}. This allows to  transfer results on measurability from \cite[L. 2.1]{Riabov}.

\end{remark}

\begin{remark} The space $\mathbb{F}$ is non-empty. We will show this in sections 4 and 6 by constructing a modification of the Arratia flow with drift as an $\mathbb{F}-$values random element. It is an interesting problem to give a direct example of  a flow $\omega\in\mathbb{F}.$ 

\end{remark}

In the next lemma we collect properties of a generic flow $\omega\in\mathbb{F}$ needed to equip $\mathbb{F}$ with nice measurability structure. 

\begin{lemma}
\label{lem1}

Consider arbitrary flow $\omega\in\mathbb{F}$ and real numbers $s,t,x$ with $s<t.$ Then

\begin{enumerate}
\item there exists $h>0$ such that either 
$$
\forall y\in[x-h,x] \  \omega_{s,t}(y)=\omega_{s,t}(x)
$$
or 
$$
\forall y\in[x,x+h] \  \omega_{s,t}(y)=\omega_{s,t}(x);
$$

\item there exists $r\in(s,t)$ and $y\in\mathbb{R}\setminus \mathcal{R}_r(\omega)$ such that  $\omega_{s,t}(x)=\omega_{r,t}(y).$
\end{enumerate}
\end{lemma}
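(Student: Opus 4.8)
The plan is to isolate one structural fact about the maps $\omega_{r,t}$ and derive both statements from it, so I would begin by proving:

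\emph{for every $r<t$ and every $v\in\omega_{r,t}(\mathbb{R})$, the set $\omega_{r,t}^{-1}(\{v\})$ is a nondegenerate bounded interval.}

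It is an interval because $\omega_{r,t}$ is increasing (recorded in the remarks following Definition~\ref{def_flow}), and it is bounded because $\inf\omega_{r,t}(\mathbb{R})=-\infty$ and $\sup\omega_{r,t}(\mathbb{R})=\infty$ by {\bf (C2)}. To see it is nondegenerate I would argue by contradiction: if $\omega_{r,t}^{-1}(\{v\})=\{z\}$, then, since $\omega_{r,t}(\mathbb{R})$ is locally finite and unbounded in both directions, $v$ has an immediate predecessor $v_-<v$ and an immediate successor $v_+>v$ in $\omega_{r,t}(\mathbb{R})$; because $z$ is the only preimage of $v$, every $\omega_{r,t}(y)$ with $y<z$ is $\le v_-$ and every $\omega_{r,t}(y)$ with $y>z$ is $\ge v_+$, whence $\omega_{r,t}(z-)\le v_-<v<v_+\le\omega_{r,t}(z+)$. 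Then $\omega_{r,t}(z)=v\notin\{\omega_{r,t}(z-),\omega_{r,t}(z+)\}$, contradicting {\bf (C4)}.

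Granting this fact, for statement (1) I would set $v=\omega_{s,t}(x)$ and $I=\omega_{s,t}^{-1}(\{v\})$, a nondegenerate interval containing $x$, and split into three cases: if $x$ is interior to $I$ both alternatives hold for small $h>0$; if $x$ is the left endpoint of $I$ then $[x,x+h]\subset I$ for small $h>0$, which is the second alternative; if $x$ is the right endpoint then $[x-h,x]\subset I$ for small $h>0$, the first alternative. For statement (2) I would fix any $r\in(s,t)$, put $v=\omega_{s,t}(x)$, and note that $\omega_{s,r}(x)\in J:=\omega_{r,t}^{-1}(\{v\})$ by {\bf (C1)}; by the structural fact $J$ is a nondegenerate interval, hence uncountable, whereas $\mathcal{R}_r(\omega)$ is countable (remark after Definition~\ref{def_flow}), so I may choose $y\in J\setminus\mathcal{R}_r(\omega)$, and then $\omega_{r,t}(y)=v=\omega_{s,t}(x)$ with $y\notin\mathcal{R}_r(\omega)$ as required (in fact this works for every $r\in(s,t)$, not just for some).

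The only genuinely delicate point is the non-degeneracy of the preimage intervals in the structural fact: that is precisely where local finiteness of the images {\bf (C2)} and the one-sided continuity {\bf (C4)} must be played against each other, and I would take care to record this fact as a standalone observation before addressing the two claims. Everything else is routine bookkeeping with monotonicity, boundedness, and countability.
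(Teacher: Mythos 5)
Your argument is correct, and all the ingredients you invoke ((C1), (C2), (C4), monotonicity, countability of $\mathcal{R}_r(\omega)$) are exactly the ones available at this point in the paper. The organization differs from the paper's in a way worth noting. For part (1) the underlying mechanism is the same: the paper splits on whether $\omega_{s,t}(x)=\omega_{s,t}(x+)$ or $\omega_{s,t}(x)=\omega_{s,t}(x-)$ (the only cases by {\bf (C4)}) and, in each case, uses local finiteness {\bf (C2)} to produce a gap in the image adjacent to $\omega_{s,t}(x)$ into which nearby values are forced; your proof packages the same tension between {\bf (C2)} and {\bf (C4)} into the standalone statement that every level set $\omega_{r,t}^{-1}(\{v\})$ is a nondegenerate bounded interval, and then reads off part (1) from the position of $x$ in that interval. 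For part (2) your route is genuinely different: the paper first produces $z>x$ with $\omega_{s,t}(z)=\omega_{s,t}(x)$ via part (1), then uses continuity of the trajectories $r\mapsto\omega_{s,r}(x)$, $r\mapsto\omega_{s,r}(z)$ to find some $r$ close to $s$ at which the two points are still separated, and picks $y$ between them off $\mathcal{R}_r(\omega)$; you instead apply the structural fact directly to $\omega_{r,t}$ for an arbitrary $r\in(s,t)$, so that $\omega_{r,t}^{-1}(\{\omega_{s,t}(x)\})$ is uncountable and must meet the complement of the countable set $\mathcal{R}_r(\omega)$. This avoids any appeal to continuity in time and yields the slightly stronger conclusion that the claim holds for \emph{every} $r\in(s,t)$, at the cost of the (correct, but contradiction-based) nondegeneracy lemma, whose key step --- that a singleton level set would force $\omega_{r,t}(z-)\le v_-<v<v_+\le\omega_{r,t}(z+)$, violating {\bf (C4)} --- you have stated precisely.
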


\begin{proof}

\begin{enumerate}
\item Assume that $\omega_{s,t}(x)=\omega_{s,t}(x+).$ Using {\bf (C2)} we can find $\varepsilon>0$ such that 
$$
(\omega_{s,t}(x),\omega_{s,t}(x)+\varepsilon)\cap \omega_{s,t}(\mathbb{R})=\emptyset.
$$
Let $h>0$ be such that $\omega_{s,t}(y)<\omega_{s,t}(x)+\varepsilon$ for all $y\in[x,x+h].$ Necessarily we have $\omega_{s,t}(y)=\omega_{s,t}(x)$ for all $y\in[x,x+h].$ Similarly, in the case $\omega_{s,t}(x)=\omega_{s,t}(x-)$ there exists $h>0 $ such that $\omega_{s,t}(y)=\omega_{s,t}(x)$ for all $y\in[x-h,x].$ In the view of {\bf (C4)} these two cases are the only possible.

\item Assume that $\omega_{s,t}(x)=\omega_{s,t}(x+).$ There exists $z>x$  such that $\omega_{s,t}(z)=\omega_{s,t}(x).$ Using continuity of trajectories we can find $r\in(s,t)$ such that $\omega_{s,r}(z)>\omega_{s,r}(x).$ The range $\mathcal{R}_r(\omega)$ is countable, so there exists $y\in(\omega_{s,r}(x),\omega_{s,r}(z))\setminus \mathcal{R}_r(\omega).$ By monotonicity and evolutionary property {\bf (C1)}, $\omega_{r,t}(y)=\omega_{s,t}(x).$
\end{enumerate}

\end{proof}

We equip $\mathbb{F}$ with a cylindrical $\sigma-$field, i.e. $\mathcal{A}$ is the smallest $\sigma-$field that makes all mappings 
$$
\omega\to \omega_{s,t}(x)
$$
$\mathcal{A}/\mathcal{B}(\mathbb{R})$-measurable. Lemma \ref{lem1} implies that the space $\mathbb{F}$ is a subset of the space of flows from \cite[Def. 2.1]{Riabov}. The following result then follows from \cite[L. 2.1]{Riabov}

\begin{lemma}
\label{lem2} \cite[Lemma 2.1]{Riabov} Let $\mathcal{H}=\{(s,t)\in\mathbb{R}^2:s\leq t\}.$ The mapping 
$$
\mathcal{H}\times \mathbb{F}\times \mathbb{R}\ni (s,t,\omega,x)\to \omega_{s,t}(x)\in\mathbb{R}
$$
is $\mathcal{B}(\mathcal{H})\otimes \mathcal{A}\otimes \mathcal{B}(\mathbb{R})/\mathcal{B}(\mathbb{R})-$measurable
\end{lemma}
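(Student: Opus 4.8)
The plan is to deduce the statement from \cite[Lemma 2.1]{Riabov}, with Lemma \ref{lem1} supplying the link. The first step is to make the remark preceding the statement precise: using the two conclusions of Lemma \ref{lem1} together with \textbf{(C1)--(C5)} one verifies that every $\omega\in\mathbb{F}$ satisfies the defining conditions of the space of flows of \cite[Def. 2.1]{Riabov}, so that $\mathbb{F}$ is (set-theoretically) a subset of that space. Conclusion (1) of Lemma \ref{lem1} yields the local one-sidedness of the step maps $x\mapsto\omega_{s,t}(x)$, while conclusion (2) yields the regeneration property that each value $\omega_{s,t}(x)$ can be written as $\omega_{r,t}(y)$ for some interior time $r\in(s,t)$ and some non-exceptional point $y\notin\mathcal{R}_r(\omega)$; these are exactly the structural properties on which the measurability argument in \cite{Riabov} rests.

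The second step is to transfer the conclusion along the inclusion. By construction the cylindrical $\sigma$-field $\mathcal{A}$ on $\mathbb{F}$ is generated by the coordinate maps $\omega\mapsto\omega_{s,t}(x)$, and these are the restrictions to $\mathbb{F}$ of the corresponding coordinate maps on the space $\mathbb{F}'$ of \cite{Riabov}; hence the inclusion $\mathbb{F}\hookrightarrow\mathbb{F}'$ is $\mathcal{A}/\mathcal{A}'$-measurable. The evaluation map $(s,t,\omega,x)\mapsto\omega_{s,t}(x)$ on $\mathcal{H}\times\mathbb{F}\times\mathbb{R}$ is then the composition of the evaluation map on $\mathcal{H}\times\mathbb{F}'\times\mathbb{R}$ with $\mathrm{id}\times(\text{inclusion})\times\mathrm{id}$. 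Since the former is $\mathcal{B}(\mathcal{H})\otimes\mathcal{A}'\otimes\mathcal{B}(\mathbb{R})/\mathcal{B}(\mathbb{R})$-measurable by \cite[Lemma 2.1]{Riabov} and the inclusion is measurable, the composition is $\mathcal{B}(\mathcal{H})\otimes\mathcal{A}\otimes\mathcal{B}(\mathbb{R})/\mathcal{B}(\mathbb{R})$-measurable, which is the assertion.

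For orientation I would also recall what drives \cite[Lemma 2.1]{Riabov}, as it explains why Lemma \ref{lem1} is needed in the stated form. For fixed $x$ and fixed lower time $s$, the map $(t,\omega)\mapsto\omega_{s,t}(x)$ is jointly measurable because it is continuous in $t$ and $\mathcal{A}$-measurable in $\omega$; monotonicity of $x\mapsto\omega_{s,t}(x)$ then reduces the $x$-dependence to countably many rational evaluations together with the selection of the one-sided value prescribed by \textbf{(C4)} and \textbf{(C5)}. The delicate point, which I expect to be the main obstacle, is the dependence on the lower endpoint $s$, for which there is no a priori regularity: this is exactly where conclusion (2) of Lemma \ref{lem1} enters, since it lets one express $\omega_{s,t}(x)$ through evaluations $\omega_{r,t}(y)$ at interior times and non-exceptional points, thereby reducing the joint measurability in $s$ to quantities already brought under control.
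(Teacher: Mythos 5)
Your proposal is correct and matches the paper's argument: the paper likewise uses Lemma \ref{lem1} to conclude that $\mathbb{F}$ is contained in the space of flows of \cite[Def. 2.1]{Riabov} and then transfers the measurability statement directly from \cite[L. 2.1]{Riabov}. Your second step merely spells out the (routine) measurability of the inclusion, which the paper leaves implicit.
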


\begin{corollary}
\label{cor_lem2_1} Let $\theta_h:\mathbb{F}\to\mathbb{F}$ be a shift,
$$
(\theta_h\omega)_{s,t}(x)=\omega_{s+h,t+h}(x).
$$
Then the mapping 
$$
\mathbb{R}\times \mathbb{F}\ni (h,\omega)\to \theta_h\omega\in\mathbb{F}
$$
is $\mathcal{B}(\mathbb{R})\otimes \mathcal{A}/\mathcal{A}-$measurable. In other words, $(\theta_h)_{h\in\mathbb{R}}$ is a measurable group of transformations of $\mathbb{F}.$
\end{corollary}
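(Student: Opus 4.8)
The plan is to reduce the claim to the joint measurability of the evaluation map established in Lemma~\ref{lem2}. Recall that $\mathcal{A}$ is, by definition, generated by the coordinate projections $\pi_{s,t,x}\colon\omega\mapsto\omega_{s,t}(x)$, $(s,t)\in\mathcal{H}$, $x\in\mathbb{R}$. By the standard factorization criterion, a map $\Theta\colon\mathbb{R}\times\mathbb{F}\to\mathbb{F}$ is $\mathcal{B}(\mathbb{R})\otimes\mathcal{A}/\mathcal{A}$-measurable as soon as each composition $\pi_{s,t,x}\circ\Theta$ is $\mathcal{B}(\mathbb{R})\otimes\mathcal{A}/\mathcal{B}(\mathbb{R})$-measurable. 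Here $\Theta(h,\omega)=\theta_h\omega$.

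First I would check that $\Theta$ is well defined, i.e.\ $\theta_h\omega\in\mathbb{F}$ for every $h\in\mathbb{R}$ and $\omega\in\mathbb{F}$. Using $(\theta_h\omega)_{s,t}=\omega_{s+h,t+h}$, properties {\bf (C1)}, {\bf (C2)}, {\bf (C4)} for $\theta_h\omega$ follow immediately from the corresponding properties of $\omega$ with shifted time arguments (note $s\le t$ implies $s+h\le t+h$, so the shifted pair stays in $\mathcal{H}$). For {\bf (C3)} and {\bf (C5)} the key observation is the identity $\mathcal{R}_s(\theta_h\omega)=\mathcal{R}_{s+h}(\omega)$, which is immediate from $\mathcal{R}_s(\omega)=\cup_{r<s}\omega_{r,s}(\mathbb{R})$; then density of $\mathcal{R}_{s+h}(\omega)$ gives {\bf (C3)}, and one-sided continuity of $\omega_{s+h,t+h}$ at points outside $\mathcal{R}_{s+h}(\omega)$ gives {\bf (C5)}. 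The group identities $\theta_{s+h}=\theta_s\circ\theta_h$ and $\theta_0=\mathrm{id}$ are read off directly from the definition of the shift.

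Next, fix $(s,t)\in\mathcal{H}$ and $x\in\mathbb{R}$. Then $(\pi_{s,t,x}\circ\Theta)(h,\omega)=(\theta_h\omega)_{s,t}(x)=\omega_{s+h,t+h}(x)$. Consider the auxiliary map $G\colon\mathbb{R}\times\mathbb{F}\to\mathcal{H}\times\mathbb{F}\times\mathbb{R}$ given by $G(h,\omega)=(s+h,t+h,\omega,x)$; it indeed takes values in $\mathcal{H}\times\mathbb{F}\times\mathbb{R}$, and it is $\mathcal{B}(\mathbb{R})\otimes\mathcal{A}/\mathcal{B}(\mathcal{H})\otimes\mathcal{A}\otimes\mathcal{B}(\mathbb{R})$-measurable, being affine (hence continuous) in $h$ and equal to the identity, resp.\ to a constant, in the $\omega$- and $x$-coordinates. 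Composing $G$ with the evaluation map $(s,t,\omega,x)\mapsto\omega_{s,t}(x)$, which is jointly measurable by Lemma~\ref{lem2}, we obtain that $(h,\omega)\mapsto\omega_{s+h,t+h}(x)$ is $\mathcal{B}(\mathbb{R})\otimes\mathcal{A}/\mathcal{B}(\mathbb{R})$-measurable. Since $(s,t,x)$ was arbitrary, the factorization criterion yields the measurability of $\Theta$, which is the assertion of the corollary.

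I do not expect a genuine obstacle: the whole argument rests on Lemma~\ref{lem2}, and the only points demanding a little care are the bookkeeping that $\theta_h$ preserves the defining conditions of $\mathbb{F}$ (for which the relation $\mathcal{R}_s(\theta_h\omega)=\mathcal{R}_{s+h}(\omega)$ is the crucial ingredient) and the routine remark that measurability into $(\mathbb{F},\mathcal{A})$ reduces to measurability of each coordinate.
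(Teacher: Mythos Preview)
Your argument is correct and is precisely the intended one: the paper states the result as an immediate corollary of Lemma~\ref{lem2} without further proof, and your reduction via coordinate projections $\pi_{s,t,x}$ together with the verification that $\theta_h$ preserves {\bf (C1)--(C5)} supplies exactly the details the paper leaves implicit.
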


\begin{corollary}
\label{cor_lem2_2} The mapping $\varphi:\mathbb{R}_+\times \mathbb{F}\times \mathbb{R}\to\mathbb{R}$,
$$
\varphi(t,\omega,x)=\omega_{0,t}(x),
$$
is a measurable perfect cocycle over $\theta.$

\end{corollary}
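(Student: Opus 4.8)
The plan is to verify directly the three requirements in Definition \ref{def15_5}: joint measurability, the normalization $\varphi(0,\omega,x)=x$, and the perfect cocycle identity. None of these needs any probabilistic input; everything follows from the definition of $\mathbb{F}$ and from Lemma \ref{lem2}.

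For measurability I would observe that $(t,\omega,x)\mapsto\varphi(t,\omega,x)=\omega_{0,t}(x)$ factors as the composition of the Borel embedding $(t,\omega,x)\mapsto(0,t,\omega,x)$ of $\mathbb{R}_+\times\mathbb{F}\times\mathbb{R}$ into $\mathcal{H}\times\mathbb{F}\times\mathbb{R}$ (well defined since $(0,t)\in\mathcal{H}$ for every $t\geq 0$, and continuous hence Borel in the $t$ variable) with the jointly measurable evaluation map $(s,t,\omega,x)\mapsto\omega_{s,t}(x)$ of Lemma \ref{lem2}. This yields exactly the required $\mathcal{B}(\mathbb{R}_+)\otimes\mathcal{A}\otimes\mathcal{B}(\mathbb{R})/\mathcal{B}(\mathbb{R})$-measurability. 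The normalization is then immediate: since $\omega_{0,\cdot}(x)\in C_x([0,\infty))$ for every $\omega\in\mathbb{F}$, we get $\varphi(0,\omega,x)=\omega_{0,0}(x)=x$.

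For the cocycle identity I would fix $s,t\geq 0$, $\omega\in\mathbb{F}$, $x\in\mathbb{R}$. From the definition of the shift $\theta_s$ (Corollary \ref{cor_lem2_1}) one has $(\theta_s\omega)_{0,t}(y)=\omega_{s,s+t}(y)$ for every $y\in\mathbb{R}$; applying this with $y=\varphi(s,\omega,x)=\omega_{0,s}(x)$ and then invoking property {\bf (C1)} with the times $0\leq s\leq s+t$ gives
$$
\varphi(t,\theta_s\omega,\varphi(s,\omega,x))=(\theta_s\omega)_{0,t}\bigl(\omega_{0,s}(x)\bigr)=\omega_{s,s+t}\bigl(\omega_{0,s}(x)\bigr)=\omega_{0,s+t}(x)=\varphi(s+t,\omega,x).
$$
Since {\bf (C1)} holds for \emph{all} $\omega\in\mathbb{F}$ without exception, the cocycle identity holds for all $\omega$, i.e. the cocycle is perfect.

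The only step with any genuine content is the joint measurability of the evaluation map, and that has already been carried out in Lemma \ref{lem2} (imported from \cite{Riabov} via the fact that $\mathbb{F}$ sits inside the space of flows considered there). Consequently I expect no real obstacle here: the corollary is essentially a bookkeeping consequence of the evolutionary property {\bf (C1)} and the measurability lemma, and the same remark applies to Corollary \ref{cor_lem2_1}.
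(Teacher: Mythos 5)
Your proof is correct and is exactly the argument the paper intends: the corollary is stated without proof precisely because it reduces to composing the evaluation map of Lemma \ref{lem2} with the Borel map $(t,\omega,x)\mapsto(0,t,\omega,x)$, using $\omega_{0,\cdot}(x)\in C_x([0,\infty))$ for the normalization, and combining the shift definition with property {\bf (C1)} for the perfect cocycle identity. Nothing to add.
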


Perfect cocycle $\varphi$ naturally defines a flow of mappings of $\mathbb{R}$ by 
$$
\psi_{s,t}(\omega,x)=\varphi(t-s,\theta_s\omega,x).
$$
As it is mentioned in section 2, the perfect cocycle property implies the evolutionary property {\bf (SF2)} (definition \ref{def15_2}) of $\psi$ without exceptions in $\omega.$ In our construction, the flow reduces to 
$$
\psi_{s,t}(\omega,x)=\omega_{s,t}(x)
$$ 
and the evolutionary property holds without expections by the property {\bf (C1)} of the definition of the space $\mathbb{F}.$ Now we proceed with the construction of the dual flow. Advantage of the presented construction is that the dual flow is  constructed $\omega-$wise and for every $\omega$ it is indeed a flow of mappings of $\mathbb{R}.$ Moreover, the dual flow is generated by a backward perfect cocycle over $\theta.$  As  discussed in the introduction, there are two natural candidates for the dual flow:

\begin{itemize}
\item the family of right-continuous generalized  inverses
\begin{equation}
\label{eq07_1}
v^+_{t,s}(\omega,y)=\inf\{x\in\mathbb{R}:\omega_{s,t}(x)>y\};
\end{equation}

\item the family of left-continuous generalized  inverses
\begin{equation}
\label{eq07_2}
v^-_{t,s}(\omega,y)=\inf\{x\in\mathbb{R}:\omega_{s,t}(x)\geq y\}.
\end{equation}

\end{itemize}

Neither of them is a flow of mappings as the evolutionary property  {\bf (C1)} fails (see \cite{Arratia2} for examples).
Below we show that a proper choice between $v^+$ and $v^-$ gives rise to a dual flow. At first we need few properties of generalized inverses. 

\begin{lemma}
\label{lem 3} Consider a flow $\omega\in\mathbb{F}.$ Then

\begin{enumerate}
\item generalized inverses $v^\pm _{t,s}(\omega,y)$ are well-defined and finite for all $t\geq s$ and $y\in\mathbb{R};$

\item for each starting point $(t,y)\in\mathbb{R}^2$ mappings $s\to v^\pm_{t,s}(\omega,y)$ are continuous on $(-\infty,t]$ with   $v^\pm_{t,t}(\omega,y)=y;$

\item a backward flow of mappings $f=\{f_{t,s}:-\infty<s\leq t<\infty\}$ is dual to the flow $\omega$ if and only if 
$$
v^-_{t,s}(\omega,y)\leq f_{t,s}(y)\leq v^+_{t,s}(\omega,y)
$$
for all $t\geq s$ and $y\in\mathbb{R}.$

\end{enumerate}

\end{lemma}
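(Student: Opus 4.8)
The plan is to establish the three items of Lemma~\ref{lem 3} in order, using the structural properties {\bf (C1)-(C5)} of $\omega$ together with Lemma~\ref{lem1}. For item (1), I would first fix $(t,y)$ and $s<t$ and note that the set $\{x\in\mathbb{R}:\omega_{s,t}(x)>y\}$ is nonempty (since $\sup\omega_{s,t}(\mathbb{R})=\infty$ by {\bf (C2)} and $\omega_{s,t}$ is increasing) and bounded below (since $\inf\omega_{s,t}(\mathbb{R})=-\infty$), so $v^+_{t,s}(\omega,y)$ is a finite real number; the same reasoning applies to $v^-$. For $s=t$ the mapping $\omega_{t,t}$ is the identity, so both infima equal $y$. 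This is routine monotone-function bookkeeping.

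For item (2), I would fix $(t,y)$ and prove continuity of $s\mapsto v^+_{t,s}(\omega,y)$ on $(-\infty,t]$, the argument for $v^-$ being symmetric. The key input is that the two-parameter family $\omega$ is a flow with continuous trajectories and that images $\omega_{s,t}(\mathbb{R})$ are locally finite. I expect to argue as follows: take $s_n\uparrow s$ or $s_n\downarrow s$; using {\bf (C1)} one can relate $v^+_{t,s_n}$ and $v^+_{t,s}$ through the intermediate map $\omega_{s_n,s}$ (or $\omega_{s,s_n}$), and then joint continuity of $(s,t,x)\mapsto\omega_{s,t}(x)$ from Lemma~\ref{lem2}, combined with the fact that the relevant level sets of an increasing step function move continuously, should give $v^+_{t,s_n}(\omega,y)\to v^+_{t,s}(\omega,y)$. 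One has to be a little careful near times where two trajectories coalesce, but local finiteness of the images ({\bf (C2)}) means only finitely many coalescence events are relevant on any compact time interval, so no pathology can accumulate. Continuity at $s=t$ follows since $\omega_{s,t}\to\mathrm{id}$ as $s\uparrow t$ uniformly on compacts, again by Lemma~\ref{lem2}.

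For item (3), which I expect to be the conceptual heart of the lemma, I would unwind Definition~\ref{def15_1}: a backward flow $f$ is dual to $\omega$ iff $(\omega_{s,t}(x)-y)(x-f_{t,s}(y))\ge 0$ for all $s\le t$ and all $x,y$. Fixing $s\le t$ and $y$, this inequality says exactly that $f_{t,s}(y)$ separates $\{x:\omega_{s,t}(x)<y\}$ from $\{x:\omega_{s,t}(x)>y\}$, i.e. that $\omega_{s,t}(x)<y\Rightarrow x\le f_{t,s}(y)$ and $\omega_{s,t}(x)>y\Rightarrow x\ge f_{t,s}(y)$. By the definitions~\eqref{eq07_1}--\eqref{eq07_2} and monotonicity of $\omega_{s,t}$, the first implication is equivalent to $f_{t,s}(y)\ge\sup\{x:\omega_{s,t}(x)<y\}=v^-_{t,s}(\omega,y)$ and the second to $f_{t,s}(y)\le\inf\{x:\omega_{s,t}(x)>y\}=v^+_{t,s}(\omega,y)$; here one uses {\bf (TP4)}-type non-stickiness only implicitly, the purely order-theoretic identities sufficing. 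Conversely, if $v^-_{t,s}(\omega,y)\le f_{t,s}(y)\le v^+_{t,s}(\omega,y)$ for all $s\le t,y$, then for any $x$ with $\omega_{s,t}(x)>y$ we get $x\ge v^+_{t,s}(\omega,y)\ge f_{t,s}(y)$, and similarly for $\omega_{s,t}(x)<y$; the case $\omega_{s,t}(x)=y$ makes the product in \eqref{eq15_3} vanish. Thus the sandwiching inequality is equivalent to \eqref{eq15_3} holding for all $s\le t$ and $x,y$, which is precisely duality of the backward flow $f$ with $\omega$.

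The main obstacle I anticipate is the continuity claim in item~(2): one must pass from continuity of the forward maps $\omega_{r,t}$ to continuity of the backward generalized inverses $v^\pm_{t,\cdot}$, and the generalized inverse of a moving family of step functions can a priori jump when a new atom is created or two atoms merge. The resolution will lean on {\bf (C2)} (local finiteness of images, so only finitely many atoms matter on a compact interval) and on the flow relation {\bf (C1)} to localize the analysis to a single short time step at a time; I do not expect a genuine difficulty, only some care in the epsilon-bookkeeping.
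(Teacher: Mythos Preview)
Your treatments of items (1) and (3) are correct and coincide with the paper's: finiteness follows from {\bf (C2)} and monotonicity, and the duality equivalence is, as the paper says, ``merely a reformulation'' of Definition~\ref{def15_1}; your detailed unpacking of (3) is fine.

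The gap is in item (2). You invoke Lemma~\ref{lem2} for ``joint continuity of $(s,t,x)\mapsto\omega_{s,t}(x)$'', but that lemma asserts only joint \emph{measurability}. More importantly, the space $\mathbb{F}$ gives you continuity of $t\mapsto\omega_{s,t}(x)$ for each fixed $(s,x)$, but no continuity in the starting time $s$; your sequential argument relating $v^+_{t,s_n}$ to $v^+_{t,s}$ through $\omega_{s_n,s}$ or $\omega_{s,s_n}$ therefore has no traction, since you cannot control how $\omega_{s_n,t}$ behaves near a point as $s_n$ varies. The claim that $\omega_{s,t}\to\mathrm{id}$ uniformly on compacts as $s\uparrow t$ is also false: for every $s<t$ the map $\omega_{s,t}$ is a step function by {\bf (C2)}, so uniform convergence to the identity is impossible. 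Local finiteness {\bf (C2)} does not help here; the missing ingredient is {\bf (C3)}.

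The paper's argument for (2) is the following, and it avoids all of these issues. Fix $s<t$ and $\varepsilon>0$. Density of $\mathcal{R}_s(\omega)$ from {\bf (C3)} yields $r<s$ and $x_1<x_2$ with
\[
v^+_{t,s}(y)-\varepsilon<\omega_{r,s}(x_1)<v^+_{t,s}(y)<\omega_{r,s}(x_2)<v^+_{t,s}(y)+\varepsilon.
\]
Now one uses only the continuity that is actually available, namely $u\mapsto\omega_{r,u}(x_i)$, to keep these inequalities for $u$ in a neighbourhood of $s$; the flow property {\bf (C1)} then forces $\omega_{r,u}(x_1)\le v^+_{t,u}(y)\le\omega_{r,u}(x_2)$, giving $|v^+_{t,u}(y)-v^+_{t,s}(y)|\le\varepsilon$. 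The point is that by going back to an earlier time $r$ and tracking forward trajectories from there, one converts the problem of varying $s$ into a problem of varying the \emph{terminal} time of a trajectory started at $r$, which is exactly the continuity one has. Your sketch never accesses this mechanism.
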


\begin{proof} In the proof we omit the dependence of $v^\pm$ on $\omega.$ 

\begin{enumerate}
\item By definition, $v^\pm_{t,t}(y)=y.$ Let $t>s$ and $y\in\mathbb{R}.$ By condition {\bf (C2)} there are $x_1,x_2\in\mathbb{R}$ such that $\omega_{s,t}(x_1)<y<\omega_{s,t}(x_2).$ Monotonicity of $\omega_{s,t}$ implies  
$$
[x_2,\infty)\subset \{x:\omega_{s,t}(x)>y\}\subset \{x:\omega_{s,t}(x)\geq y\}\subset (x_1,\infty).
$$
Infima in \eqref{eq07_1} and \eqref{eq07_2} are finite:
$$
x_1\leq v^-_{t,s}(y)\leq v^+_{t,s}(y)\leq x_2.
$$

\item We prove continuity of $v^+_{s,t}(y)$ at a point $s<t.$ Proofs for $v^-$ and $s=t$ are similar. Let $\varepsilon>0.$ Using {\bf (C3)} we can find  $r<s$ and $x_1<x_2$  such that 
$$
v^+_{t,s}(y)-\varepsilon<\omega_{r,s}(x_1)<v^+_{t,s}(y)<\omega_{r,s}(x_2)<v^+_{t,s}(y)+\varepsilon.
$$
By continuity of trajectories $t\to\omega_{r,t}(x)$ there exists $\delta\in(0,\min(s-r,t-s))$ such that for all $u\in[s-\delta,s+\delta]$
\begin{equation}
\label{eq07_3}
v^+_{t,s}(y)-\varepsilon<\omega_{r,u}(x_1)<v^+_{t,s}(y)<\omega_{r,u}(x_2)<v^+_{t,s}(y)+\varepsilon.
\end{equation}
Then from the definition of $v^+$ and the evolutionary property of $\omega,$
$$
\omega_{r,s}(x_1)<v^+_{t,s}(y)\Rightarrow \omega_{s,t}(\omega_{r,s}(x_1))=\omega_{r,t}(x_1)\leq y
$$
$$
\Rightarrow \omega_{u,t}(\omega_{r,u}(x_1))\leq y\Rightarrow \omega_{r,u}(x_1)\leq v^+_{t,u}(y).
$$
Similarly, $\omega_{r,u}(x_2)\geq v^+_{t,u}(y)$ and 
$$
\omega_{r,u}(x_1)\leq v^+_{t,u}(y)\leq \omega_{r,u}(x_2).
$$
From inequalities \eqref{eq07_3} we deduce that for all $u\in[s-\delta,s+\delta],$
$$
|v^+_{t,u}(y)-v^+_{t,s}(y)|\leq \varepsilon.
$$

Statement 3 is merely a reformulation of the definition \ref{def15_1} of duality.
\end{enumerate}

\end{proof}

The following definition is taken from \cite{Arratia2}.

\begin{definition}
\label{def_reg_point} A point $(t,y)\in\mathbb{R}^2$ is said to be left  regular for a flow $\omega\in\mathbb{F},$ if for all $u\geq t$ $\omega_{t,u}(y)=\omega_{t,u}(y-).$ Otherwise a point $(t,y)\in\mathbb{R}^2$ is  said to be left irregular for $\omega.$
\end{definition}

\begin{remark} A point $(t,y)$ is left regular for $\omega$ if and only if there exist two rational sequences $(u_n)_{n\geq 1}$ and $(y_n)_{n\geq 1}$ such that $u_n>t,$ $y_n<y,$ $\lim_{n\to\infty}u_n=t$ and $\omega_{t,u_n}(y_n)=\omega_{t,u_n}(y)$ (see Lemma \ref{lem1}).  In view of the property {\bf (C4)}, if a point $(t,y)$ is left irregular   for $\omega,$ then there exist two rational sequences $(u_n)_{n\geq 1}$ and $(y_n)_{n\geq 1}$ such that $u_n>t,$ $y_n>y,$ $\lim_{n\to\infty}u_n=t$ and $\omega_{t,u_n}(y_n)=\omega_{t,u_n}(y).$

\end{remark}

In the next theorem we construct a backward flow of mappings $\tilde{\psi}(\omega)=\{\tilde{\psi}_{t,s}(\omega,\cdot):-\infty<s\leq t<\infty$ such that for every $\omega$ $\tilde{\psi}$ is dual to $\omega.$ The result extends  \cite[Section 6]{Arratia2}.

\begin{theorem}
\label{thm2} For all $s\leq t,$ $y\in\mathbb{R}$ and $\omega\in\mathbb{F}$ set 
$$
\tilde{\psi}_{t,s}(\omega,y)=\begin{cases}
v^+_{t,s}(\omega,y), \mbox{ if the point } (t,y) \mbox{ is left regular for } \omega \\
v^-_{t,s}(\omega,y), \mbox{ if the point } (t,y) \mbox{ is left  irregular for } \omega 
\end{cases}
$$
Then 
\begin{enumerate}
\item the mapping $(s,t,\omega,y)\to \tilde{\psi}_{t,s}(\omega,y)$ is jointly measurable;

\item for every $\omega\in\mathbb{F},$  $\tilde{\psi}(\omega)=\{\tilde{\psi}_{t,s}(\omega,\cdot):-\infty<s\leq t<\infty$ is a backward flow dual to $\omega;$

\item for all $\omega\in\mathbb{F},$ $t\geq s,$ $y\in \mathbb{R}$ and $h\in\mathbb{R},$
\begin{equation}
\label{eq07_4}
\tilde{\psi}_{t,s}(\theta_h\omega,y)=\tilde{\psi_{t+h,s+h}}(\omega,y).
\end{equation}
\end{enumerate}

\end{theorem}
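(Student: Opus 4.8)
The plan is to prove the three assertions separately; the real content is the backward evolutionary property \eqref{eq15_2} in assertion~2, and within that, a small lemma on left regularity of intermediate points.

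\textbf{Measurability and the shift relation.} For assertion~1 I would first note that, since each $x\mapsto\omega_{s,t}(x)$ is increasing with locally finite image (hence a step function), the sets $\{x:\omega_{s,t}(x)>y\}$ and $\{x:\omega_{s,t}(x)\ge y\}$ are up-sets, so $v^+_{t,s}(\omega,y)=\inf\{q\in\mathbb Q:\omega_{s,t}(q)>y\}$ and $v^-_{t,s}(\omega,y)=\inf\{q\in\mathbb Q:\omega_{s,t}(q)\ge y\}$; each is a countable infimum of functions of $(s,t,\omega,y)$ measurable by Lemma~\ref{lem2}, and finite by Lemma~\ref{lem 3}(1), hence jointly measurable. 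Then I would invoke the Remark following Definition~\ref{def_reg_point}: $(t,y)$ is left regular for $\omega$ iff for every $m\in\mathbb N$ there are rationals $u\in(t,t+1/m)$ and $z<y$ with $\omega_{t,u}(z)=\omega_{t,u}(y)$; writing this as a countable intersection of countable unions of sets measurable by Lemma~\ref{lem2} shows that $L=\{(t,\omega,y):(t,y)\text{ is left regular for }\omega\}$ is measurable, and since $\tilde\psi$ agrees with $v^+$ on $L$ and with $v^-$ off $L$, joint measurability of $\tilde\psi$ follows. For assertion~3 I would simply unwind the definitions: $(\theta_h\omega)_{s,t}=\omega_{s+h,t+h}$ yields $v^\pm_{t,s}(\theta_h\omega,y)=v^\pm_{t+h,s+h}(\omega,y)$ at once, and $(t,y)$ is left regular for $\theta_h\omega$ exactly when $(t+h,y)$ is left regular for $\omega$, so the two branches of the definition of $\tilde\psi$ transform correctly and \eqref{eq07_4} holds.

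\textbf{The backward flow property (reduction).} Fix $\omega\in\mathbb F$. Here $\tilde\psi_{s,s}(\omega,y)=y$ is immediate, and since the branch in the definition depends only on $(t,y)$ and not on $s$, the map $s\mapsto\tilde\psi_{t,s}(\omega,y)$ equals $v^+_{t,\cdot}(\omega,y)$ throughout or $v^-_{t,\cdot}(\omega,y)$ throughout, hence is continuous on $(-\infty,t]$ by Lemma~\ref{lem 3}(2). The core is \eqref{eq15_2}: for $r<s<t$ and $y\in\mathbb R$, with $z:=\tilde\psi_{t,s}(\omega,y)$, that $\tilde\psi_{s,r}(\omega,z)=\tilde\psi_{t,r}(\omega,y)$ (the cases $r=s$, $s=t$ being trivial). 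I would begin by observing that $z$ is a jump point of $\omega_{s,t}$: if $(t,y)$ is left regular then $z=v^+_{t,s}(\omega,y)$ with $\omega_{s,t}(z-)\le y<\omega_{s,t}(z+)$, and if $(t,y)$ is left irregular then $z=v^-_{t,s}(\omega,y)$ with $\omega_{s,t}(z-)<y\le\omega_{s,t}(z+)$; in both cases $\omega_{s,t}(z-)<\omega_{s,t}(z+)$, and by \textbf{(C4)} the value $\omega_{s,t}(z)$ is one of these two one-sided values.

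\textbf{The key sub-lemma and conclusion.} The heart of the matter is: \emph{if $z$ is a jump point of $\omega_{s,t}$, then $(s,z)$ is left regular for $\omega$ iff $\omega_{s,t}(z)=\omega_{s,t}(z-)$, and left irregular iff $\omega_{s,t}(z)=\omega_{s,t}(z+)$.} The ``irregular'' direction is immediate, with $u=t$ as witness. For the ``regular'' direction I would argue by contradiction: if some $u_0>s$ has $\omega_{s,u_0}(z)=\omega_{s,u_0}(z+)>\omega_{s,u_0}(z-)$, then — composing with $\omega_{t,u_0}$ when $u_0\ge t$, and writing $\omega_{s,t}=\omega_{u_0,t}\circ\omega_{s,u_0}$ and using that $\omega_{s,u_0}$ is constant just to the right of $z$ when $u_0<t$, together with \textbf{(C1)} and the step-function structure of the flows — one is forced to conclude $\omega_{s,t}(z+)=\omega_{s,t}(z)=\omega_{s,t}(z-)$, contradicting that $z$ is a jump point. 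Granting the sub-lemma, \eqref{eq15_2} follows by a short computation through $\omega_{r,t}=\omega_{s,t}\circ\omega_{r,s}$: the up-set $\{w:\omega_{s,t}(w)>y\}$ (in the left-regular case) or $\{w:\omega_{s,t}(w)\ge y\}$ (in the left-irregular case) equals $(z,\infty)$ when $\omega_{s,t}(z)=\omega_{s,t}(z-)$ and $[z,\infty)$ when $\omega_{s,t}(z)=\omega_{s,t}(z+)$; substituting $w=\omega_{r,s}(x)$ then identifies $\tilde\psi_{t,r}(\omega,y)$ with $v^+_{s,r}(\omega,z)$ in the first subcase and with $v^-_{s,r}(\omega,z)$ in the second, which by the sub-lemma is precisely $\tilde\psi_{s,r}(\omega,z)$. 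Finally, since $v^-_{t,s}(\omega,y)\le\tilde\psi_{t,s}(\omega,y)\le v^+_{t,s}(\omega,y)$ always, Lemma~\ref{lem 3}(3) (or a two-line direct check of \eqref{eq15_3}) shows that $\tilde\psi(\omega)$ is a backward flow dual to $\omega$, completing assertion~2. I expect the main obstacle to be the ``regular'' direction of the sub-lemma, i.e. excluding that $(s,z)$ becomes left irregular ``at an intermediate time'' $u_0\in(s,t)$: this is the one place where \textbf{(C4)} and the step-function structure of the $\omega_{\cdot,\cdot}$ genuinely interact, and the case split $u_0\ge t$ versus $u_0<t$, with the attendant care about one-sided limits of compositions of step functions, is the delicate point; everything else — the generalized-inverse-of-a-composition identities in the last computation, the continuity and sandwiching from Lemma~\ref{lem 3}, and the rationalization in assertion~1 — is routine.
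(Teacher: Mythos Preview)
Your proposal is correct, but it organises the proof of the backward evolutionary property differently from the paper. The paper argues by contradiction directly on the identity $\tilde\psi_{s,r}(\tilde\psi_{t,s}(y))=\tilde\psi_{t,r}(y)$: assuming the two sides differ, it picks a point strictly between them, chases it through the flow, and arrives at the absurdity that $(t,y)$ is simultaneously left regular and left irregular (the case $x>\tilde x$ being declared ``similar''). You instead isolate a clean structural fact --- at any jump point $z$ of $\omega_{s,t}$, left regularity of $(s,z)$ is equivalent to $\omega_{s,t}(z)=\omega_{s,t}(z-)$ --- and then read off the evolutionary property by computing which of $(z,\infty)$ or $[z,\infty)$ the relevant up-set equals and composing generalized inverses. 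Your route is more constructive and exposes a reusable lemma; the paper's route is shorter to write but hides the mechanism inside the contradiction. One small imprecision in your sketch: in the ``regular'' direction of the sub-lemma the two cases $u_0\ge t$ and $u_0<t$ yield \emph{different} contradictions (the first contradicts the assumed irregularity at time $u_0$ via $\omega_{s,u_0}(z-)=\omega_{s,u_0}(z)$, while only the second actually forces $\omega_{s,t}(z+)=\omega_{s,t}(z-)$ and thus contradicts the jump at time $t$); your final sentence conflates them, but you clearly have the right case split in mind and the fix is trivial. The measurability and shift arguments in your assertions~1 and~3 are essentially the same as the paper's.
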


\begin{proof} Note that \eqref{eq07_4} is an immediate consequence of the definitions. Since $(h,\omega)\to\theta_h\omega$ is jointly measurable, the joint measurability of $\tilde{\psi}_{t,s}(\omega,y)$ follows from the joint measurability of 
$$
(s,\omega,y)\to\tilde{\psi}_{0,s}(y).
$$
Let 
$$
A=\{(\omega,y)\in\mathbb{F}\times\mathbb{R}:(0,y) \mbox{ is left regular for } \omega\}.
$$
Measurability of $A$ follows from the representation
$$
A=\bigcap_{q\in\mathbb{Q},q>0}\bigcup_{x\in\mathbb{Q}}\bigg(\mathbb{F}\times (x,\infty)\cap \{(\omega,y)|\omega_{0,q}(y)=\omega_{0,q}(x)\}\bigg).
$$
Since $\tilde{\psi}_{0,s}(\omega,y)=v^\pm_{0,s}(\omega,y)$ depending on whether $(\omega,y)\in A$ or not, it is enough to check joint measurability of 
$$
(s,\omega,y)\to v^\pm_{0,s}(y).
$$
The latter follows from equivalences 
$$
v^+_{0,s}(\omega,y)<c\Leftrightarrow \exists \mbox{ rational } q<c: \omega_{s,0}(q)>y;
$$
$$
v^-_{0,s}(\omega,y)<c\Leftrightarrow \exists \mbox{ rational } q<c: \omega_{s,0}(q)\geq y.
$$
The property 1)  is proved.

Now we check that for any $\omega\in\mathbb{F}$ the family of mappings $\{\tilde{\psi}_{t,s}(\omega,\cdot ): -\infty<s\leq t<\infty\}$ is a backward flow of mappings of $\mathbb{R},$ i.e. that the evolutionary  property  holds. Since $\tilde{\psi}_{t,t}(\omega,y)=y,$ it is enough to consider the case $r<s<t.$ Denote $\tilde{y}=\tilde{\psi}_{t,s}(\omega,y),$ $x=\tilde{\psi}_{t,r}(\omega,y),$ $\tilde{x}=\tilde{\psi}_{s,r}(\omega,\tilde{y}).$

Assume $x<\tilde{x}$ and let $z\in(x,\tilde{x}).$ From inequalities 
$$
z>x=\tilde{\psi}_{t,r}(\omega,y)\geq v^-_{t,r}(\omega,y)
$$
and
$$
z<\tilde{x}=\tilde{\psi}_{s,r}(\omega,\tilde{y})\leq v^+_{s,r}(\omega,\tilde{y})
$$ 
it follows that $\omega_{r,t}(z)\geq y,$ $\omega_{r,s}(z)\leq \tilde{y}.$ Assume that $\omega_{r,s}(z)<\tilde{y}.$ Since $\tilde{y}\leq v^+_{t,s}(\omega,y),$ we have $\omega_{s,t}(\omega_{r,s}(z))=\omega_{r,t}(z)\leq y.$  Hence, $\omega_{r,t}(z)=y.$ Denote $\tilde{z}=\omega_{r,s}(z).$ We have obtained relations 
$$
\tilde{\psi}_{t,r}(\omega,y)<z, \ \omega_{r,t}(z)=y 
$$
$$
\tilde{z}<\tilde{\psi}_{t,s}(\omega,y), \ \omega_{s,t}(\tilde{z})=y 
$$
Then $\tilde{\psi}_{t,r}(\omega,y)<z\leq v^+_{r,t}(\omega,y)$ and $\tilde{\psi}_{t,r}(\omega,y)\ne v^+_{t,r}(\omega,y).$ The point $(t,y)$ is left irregular for $\omega.$ But also $v^-_{t,s}(\omega,y)\leq \tilde{z}<\tilde{\psi}_{t,s}(\omega,y)$ and $\tilde{\psi}_{t,s}(\omega,y)\ne v^-_{t,r}(\omega,y).$ The point $(t,y)$ is left regular for $\omega,$ which is impossible.

Obtained contradiction shows that  $\omega_{r,s}(z)=\tilde{y}=\tilde{\psi}_{t,s}(\omega,y),$ $z\geq v^-_{s,r}(\omega,\tilde{y}).$   From inequalities 
$$
\tilde{\psi}_{s,r}(\omega,\tilde{y})>z\geq v^-_{s,r}(\omega,\tilde{y})
$$
it follows that the point $(s,\tilde{y})$ is left regular for $\omega.$ there exists $\hat{y}<\tilde{y}$ such that $\omega_{s,t}(\hat{y})=\omega_{s,t}(\tilde{y}).$ Further, 
$$
\hat{y}<\tilde{y}=\tilde{\psi}_{t,s}(\omega,y)\leq v^+_{t,s}(\omega,y)
$$
and $\omega_{s,t}(\hat{y})\leq y.$ On the onther hand,
$$
\omega_{s,t}(\hat{y})=\omega_{s,t}(\tilde{y})=\omega_{r,t}(z)\geq y.
$$ 
It means that 
$$
\omega_{r,t}(z)=\omega_{s,t}(\tilde{y})=\omega_{s,t}(\hat{y})=y,
$$
and
$$
\tilde{y}=\tilde{\psi}_{t,s}(\omega,y)>\hat{y}\geq v^-_{t,s}(\omega,y).
$$
Since $\tilde{\psi}_{t,r}(\omega,y)<z\leq v^+_{t,r}(\omega,y),$ we deduce that again the point $(t,y)$ is both left regular and left irregular for $\omega.$ The case $x<\tilde{x}$ is impossible. 

Considerations in the case $x>\tilde{x}$ are similar.
 
\end{proof}

\begin{corollary}
\label{cor_thm1} The mapping 
$$
\tilde{\varphi}:\mathbb{R}_+\times \mathbb{F}\times\mathbb{R}\to\mathbb{R}, \ \tilde{\varphi}(t,\omega,x)=\tilde{\psi}_{t,0}(\omega,x)
$$
is a perfect backward cocycle over $\theta.$

\end{corollary}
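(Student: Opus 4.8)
The corollary asserts three properties of $\tilde\varphi(t,\omega,x)=\tilde\psi_{t,0}(\omega,x)$: joint measurability in $(t,\omega,x)$, the normalization $\tilde\varphi(0,\omega,x)=x$, and the backward cocycle identity $\tilde\varphi(t+s,\omega,x)=\tilde\varphi(t,\omega,\tilde\varphi(s,\theta_t\omega,x))$ of Definition \ref{def15_5}. The plan is to read each of these off directly from Theorem \ref{thm2}; the corollary is essentially a repackaging of that theorem, and no new analysis is needed.

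First, for measurability I would note that $\tilde\varphi$ is nothing but the restriction of the jointly measurable map $(s,t,\omega,y)\to\tilde\psi_{t,s}(\omega,y)$ from Theorem \ref{thm2}(1) to the slice $s=0$; a slice of a jointly measurable map is jointly measurable in the remaining variables, which yields the required $\mathcal{B}(\mathbb{R}_+)\otimes\mathcal{A}\otimes\mathcal{B}(\mathbb{R})/\mathcal{B}(\mathbb{R})$-measurability. For the normalization, by definition $\tilde\psi_{0,0}(\omega,x)$ equals either $v^+_{0,0}(\omega,x)$ or $v^-_{0,0}(\omega,x)$, and Lemma \ref{lem 3}(2) gives that both of these equal $x$; hence $\tilde\varphi(0,\omega,x)=x$.

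The remaining point is the cocycle identity, and here I would combine two ingredients of Theorem \ref{thm2}. Using the shift covariance \eqref{eq07_4} with shift parameter $h=t$ rewrites the inner evaluation as $\tilde\varphi(s,\theta_t\omega,x)=\tilde\psi_{s,0}(\theta_t\omega,x)=\tilde\psi_{t+s,t}(\omega,x)$, so that $\tilde\varphi(t,\omega,\tilde\varphi(s,\theta_t\omega,x))=\tilde\psi_{t,0}\bigl(\omega,\tilde\psi_{t+s,t}(\omega,x)\bigr)$. Since by Theorem \ref{thm2}(2) the family $\{\tilde\psi_{t',s'}(\omega,\cdot)\}$ is a genuine backward flow of mappings of $\mathbb{R}$ for every fixed $\omega$, the backward evolutionary property \eqref{eq15_2} applies to the triple of times $0\le t\le t+s$ and collapses the right-hand side to $\tilde\psi_{t+s,0}(\omega,x)=\tilde\varphi(t+s,\omega,x)$. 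This is precisely the identity in Definition \ref{def15_5}, so $\tilde\varphi$ is a backward perfect cocycle over $\theta$.

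There is no genuine obstacle here, since all of the work has already been absorbed into Theorem \ref{thm2}. The only thing requiring care is the bookkeeping of time indices when specializing \eqref{eq07_4} and \eqref{eq15_2} to lower time $0$, so that the middle evaluation time $t$ in the composition lines up with the shift parameter in the covariance relation.
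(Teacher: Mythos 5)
Your proof is correct and is exactly the argument the paper intends: the corollary is stated without proof as an immediate consequence of Theorem \ref{thm2}, and your unpacking — measurability from part (1), the normalization from Lemma \ref{lem 3}(2), and the cocycle identity from the backward evolutionary property of part (2) combined with the shift covariance \eqref{eq07_4} of part (3) — is the intended route. The time-index bookkeeping ($\tilde\psi_{s,0}(\theta_t\omega,x)=\tilde\psi_{t+s,t}(\omega,x)$, then collapse via \eqref{eq15_2} with lower time $0$) is carried out correctly.
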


\section{Coalescing stochastic flow as a random element in $\mathbb{F}$}

In this section starting from a sequence $\{P^{(n)}:n\geq 1\}$ of transition probabilities for the $n-$point motions we construct a probability measure on the space $(\mathbb{F},\mathcal{A})$ that makes $\psi_{s,t}(\omega,x)=\omega_{s,t}(x)$ a stochastic flow with finite-point motions defined by $\{P^{(n)}:n\geq 1\}.$ We recall assumptions on finite-point motions.

\begin{itemize}
\item {\bf (TP1)} Each $P^{(n)}=\{P^{(n)}_t:t\geq 0\}$ is a Feller trasition probability on $(\mathbb{R}^n,\mathcal{B}(\mathbb{R}^n)).$

\item {\bf (TP2)} Given $1\leq i_1<i_2<\ldots<i_k\leq n$ let $\pi_{i_1,\ldots,i_k}:\mathbb{R}^n\to\mathbb{R}^k$ be a projection, $\pi_{i_1,\ldots,i_k}(x)=(x_{i_1},\ldots,x_{i_k}).$ The for all $t\geq 0, $$x\in\mathbb{R}^n$ and $C\in\mathcal{B}(\mathbb{R}^k),$
$$
P^{(n)}_t(x,\pi^{-1}_{i_1,\ldots,i_k}C)=P^{(k)}_t(\pi_{i_1,\ldots,i_k}x,C).
$$

\item {\bf (TP3)} Let $\Delta=\{(y,y):y\in\mathbb{R}\}$  be a diagonal in $\mathbb{R}^2,$ then for all $t\geq 0$ and $x\in \Delta$
$$
P^{(2)}_t(x,\Delta)=1.
$$

\item {\bf (TP4)} For all $t>0,$ $x,y\in\mathbb{R}$
$$
P^{(1)}_t(x,\{y\})=0.
$$

\item {\bf (TP5)} For all real $a<b$ and $\varepsilon>0$
$$
\lim_{t\to 0}t^{-1}\sup_{x\in[a,b]}P^{(1)}_t(x,(x-\varepsilon,x+\varepsilon)^c)=0
$$

\item {\bf (TP6)} Given reals $a<b$ and $t>0$ there exists an increasing continuous function $m_{a,b,t}:\mathbb{R}\to\mathbb{R}$ such that for all $x_1,x_2$ with  $a\leq x_1<x_2\leq b,$
$$
\mathbb{P}^{(2)}_{(x_1,x_2)}(\forall s\in [0,t] \ a\leq X^{(2)}_1(s)<X^{(2)}_2(s)\leq b)\leq m_{a,b,t}(x_2)-m_{a,b,t}(x_1).
$$

\item {\bf (TP7)} Given reals $a<b$ and $t>0$ there exists a positive function $f_{a,b,t}:(0,\infty)\to(0,\infty)$ such that for all $x_1,x_2,x_3$ with  $a\leq x_1<x_2<x_3\leq b,$
$$
\mathbb{P}^{(3)}_{(x_1,x_2,x_3)}(\forall s\in [0,t] \ a\leq X^{(3)}_1(s)<X^{(3)}_2(s)<X^{(3)}_3(s)\leq b)\leq f_{a,b,t}(x_3-x_1).$$

\end{itemize}

We will use the function  
$$
w_{a,b}(\varepsilon,\delta)=\inf\{t>0: \sup_{x\in[a,b]}P^{(1)}_t(x,(x-\varepsilon,x+\varepsilon)^c)\geq \delta t\}
$$
defined in section 2 \eqref{eq08_1}.

\begin{theorem}
\label{thm3} Let $\{P^{(n)}:n\geq 1\}$ be a compatible sequence of coalescing Feller transition probabilities satisfying conditions {\bf (TP1)-(TP7)}.
 Assume that for any reals $a<b$ and $t>0$
 $$
 \liminf_{\varepsilon,\delta\to 0}\frac{f_{a,b,t}(8\varepsilon)}{w_{a,b}(\varepsilon,\delta)}=0.
 $$
 Then there exists an $\mathbb{F}-$valued random element $\psi=\{\psi_{s,t}:-\infty<s\leq t<\infty\},$ which is a stochastic flow with finite-point motions determined by $\{P^{(n)}:n\geq 1\}.$
\end{theorem}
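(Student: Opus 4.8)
The plan is to realize the flow on an auxiliary probability space, first on a countable dense set of starting points, then extend it monotonically to all of $\mathbb{R}$, verify that the result a.s.\ belongs to $\mathbb{F}$, and push its law forward to $(\mathbb{F},\mathcal{A})$; this follows the scheme of \cite{Riabov}, the genuinely new input being the use of {\bf (TP7)} and \eqref{eq21_3} to obtain property {\bf (C3)}. For the first step, fix a countable dense set $D_0\subset\mathbb{R}^2$ of time--space starting points. By {\bf (TP1)--(TP3)} the kernels $\{P^{(n)}\}$ define a consistent family of finite-dimensional distributions for $\{\psi_{s,t}(x):(s,x)\in D_0,\ t\ge s\}$ (the past-conditional law of $\bigl(\psi_{s,t}(x_1),\dots,\psi_{s,t}(x_n)\bigr)$ is $P^{(n)}_{t-s}$; consistency under deletion and permutation of points is {\bf (TP2)}; {\bf (TP3)} keeps coalesced coordinates coalesced), and Kolmogorov's theorem gives a probability space carrying such a process. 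By {\bf (TP5)} and \cite[Ch.~4, Prop.~2.9]{EK} the one-point, hence all finite-point, motions admit continuous modifications, so all $t\mapsto\psi_{s,t}(x)$, $(s,x)\in D_0$, may be taken continuous. Since the diagonal is absorbing for the two-point motion ({\bf (TP3)}), continuity together with the strong Markov property forbids any reversal of order, so $x_1<x_2\Rightarrow\psi_{s,t}(x_1)\le\psi_{s,t}(x_2)$ a.s.; similarly the Markov property yields the evolutionary identity $\psi_{s,t}(\psi_{r,s}(x))=\psi_{r,t}(x)$ a.s.\ for each fixed rational triple $r\le s\le t$ with $(r,x)\in D_0$.

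Next, following \cite{Riabov}, extend the flow to all real starting points by monotone right limits off the range $\mathcal{R}_s$ and by transport along an earlier trajectory on $\mathcal{R}_s$; continuity and the previous step make this well defined and give {\bf (C1)} for all reals, while {\bf (C4)} and {\bf (C5)} hold by construction. Summing the two-point survival bound {\bf (TP6)} over a finite partition of $[a,b]$ and letting the mesh tend to $0$ shows that $\psi_{s,t}([a,b])$ has finitely many points a.s., so $\psi_{s,t}(\mathbb{R})$ is a.s.\ locally finite; that $\sup\psi_{s,t}(\mathbb{R})=\infty$ and $\inf\psi_{s,t}(\mathbb{R})=-\infty$ follows from {\bf (TP4)}--{\bf (TP5)} as in \cite{Riabov}. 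Hence a.s.\ the flow lies in the space of flows of \cite[Def.~2.1]{Riabov}, and it remains only to establish {\bf (C3)}.

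For {\bf (C3)} it suffices, by countability, to fix rationals $a<b$ and $s$ and prove $\mathbb{P}\bigl(\forall r<s:\ \psi_{r,s}(\mathbb{R})\cap(a,b)=\emptyset\bigr)=0$. On this event, for every $r<s$ the increasing, locally-finite-ranged step map $\psi_{r,s}$ jumps over $(a,b)$ at a single point $u_r$; picking $p_1<p_2<p_3$ within $\varepsilon$ of $u_r$ with $\psi_{r,s}(p_1)\le a<b\le\psi_{r,s}(p_3)$, the triple $\bigl(\psi_{r,v}(p_1),\psi_{r,v}(p_2),\psi_{r,v}(p_3)\bigr)$ stays strictly ordered on $[r,s]$ and has spread $\ge b-a$ at $v=s$. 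Covering the bounded spatial region where such $u_r$ can lie by $O(\varepsilon^{-1})$ triples of spread $\le 8\varepsilon$, {\bf (TP7)} bounds the probability that some such triple survives strictly ordered up to time $s$; on the other hand, by the definition \eqref{eq08_1} of $w_{a',b'}$, over a time interval shorter than $w_{a',b'}(\varepsilon,\delta)$ each coordinate leaves its $\varepsilon$-neighborhood with probability less than $\delta$ times the elapsed time, so on that time scale no triple of spread $\le 8\varepsilon$ can reach spread $b-a$ with non-negligible probability. A chaining/Borel--Cantelli argument along the time scale $\asymp w_{a',b'}(\varepsilon,\delta)$, combining these two estimates, forces the probability of a persistent gap to $0$ precisely because of the vanishing ratio \eqref{eq21_3}. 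Thus {\bf (C3)} holds a.s.

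Combining the three steps, the flow is a.s.\ $\mathbb{F}$-valued; since $\mathbb{F}$ carries the cylindrical $\sigma$-field, $\omega\mapsto\psi_{\cdot}(\omega)$ is measurable, and its law $\mu$ on $(\mathbb{F},\mathcal{A})$ makes the canonical coordinate flow an $\mathbb{F}$-valued random element. Finally {\bf (SF1)} is Lemma \ref{lem2}, {\bf (SF2)} is {\bf (C1)}, and {\bf (SF3)} --- the past-conditional law of the finite-point motions being $P^{(n)}_{t-s}$ --- follows from the Markovian independence built into the first step together with {\bf (C3)}, which lets one recover the value at an arbitrary real starting point as an a.s.\ limit of values at points of $D_0$; this is again as in \cite{Riabov}. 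I expect the main obstacle to be the verification of {\bf (C3)}: quantifying the persistent-gap event needs the precise balance \eqref{eq21_3} between three-point coalescence ({\bf (TP7)}) and the one-point diffusive time scale $w_{a,b}$, and making the covering/chaining argument uniform over the random jump locations $u_r$ is the delicate part.
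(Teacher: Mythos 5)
There is a genuine gap: you have misidentified which property of Definition \ref{def_flow} actually requires the new hypothesis {\bf (TP7)} together with \eqref{eq21_3}. In the paper, property {\bf (C3)} (density of the ranges $\mathcal{R}_s$) is \emph{not} the new part --- it is inherited directly from \cite[Th.~1.1]{Riabov}, which already yields a flow satisfying {\bf (C1)}, {\bf (C3)}, {\bf (C5)}, finiteness of $\psi_{s,t}([a,b])$, and the representation $\psi_{s,t}(x)=\psi_{r,t}(y)$ with $y\notin\mathcal{R}_r(\psi)$, all under {\bf (TP1)--(TP6)}. The property that genuinely needs {\bf (TP7)} and the ratio condition \eqref{eq21_3} is {\bf (C4)}: that for every $s\le t$ and every $x$ one has $\psi_{s,t}(x)\in\{\psi_{s,t}(x-),\psi_{s,t}(x+)\}$. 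Your claim that ``{\bf (C4)} and {\bf (C5)} hold by construction'' is false for {\bf (C4)}: at a point $x=\psi_{r,s}(y)\in\mathcal{R}_s$ the value $\psi_{s,t}(x)=\psi_{r,t}(y)$ is forced by the evolutionary property, and nothing in the monotone-extension construction prevents it from lying \emph{strictly between} $\psi_{s,t}(x-)$ and $\psi_{s,t}(x+)$, i.e.\ from being an isolated middle trajectory that never merges with the cluster on its left or its right. Ruling this out is exactly the content of the paper's argument (adapted from T\'oth--Werner): if $\psi_{t,t+\eta}(\psi_{s,t}(y)-)<\psi_{s,t+\eta}(y)<\psi_{t,t+\eta}(\psi_{s,t}(y)+)$ for some $t$, one manufactures, on a grid of mesh comparable to $w_{-m,m}(\varepsilon_n,\delta_n)$, a triple of points at mutual distance at most $8\varepsilon_n$ that stays strictly ordered and confined for time at least $\eta/2$; the union bound costs a factor $K\asymp M/w_{-m,m}(\varepsilon_n,\delta_n)$, and {\bf (TP7)} bounds each term by $f_{-m,m,\eta/2}(8\varepsilon_n)$, so \eqref{eq21_3} is precisely what kills the total. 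Without {\bf (C4)} the realization need not lie in $\mathbb{F}$, and the subsequent $\omega$-wise duality construction (which hinges on the left-regular/left-irregular dichotomy) collapses.

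Separately, your sketched use of {\bf (TP7)} for {\bf (C3)} does not match the hypothesis: {\bf (TP7)} bounds the probability that three trajectories stay strictly ordered \emph{and confined to a fixed interval} $[a,b]$, whereas a persistent gap in the range corresponds to a single large jump of the step map $\psi_{r,s}$, an event in which the relevant triple need neither remain confined nor remain strictly ordered (the middle point $p_2$ must land on one side of the gap by time $s$, so strict ordering typically fails). So even as an argument for {\bf (C3)} the sketch would not go through as written; fortunately {\bf (C3)} does not need it.
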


\begin{proof}
 From \cite{Riabov} it follows that there exists a stochastic flow $\psi=\{\psi_{s,t}:-\infty<s\leq t<\infty\}$ with finite-point motions determined by $\{P^{(n)}:n\geq 1\}$  and such that all realizations $\{\psi_{s,t}(\omega,\cdot):-\infty<s\leq t<\infty\}$ satisfy 
 
(i) properties  {\bf (C1),(C3),(C5)} of the definition \ref{def_flow} \cite[Th. 1.1]{Riabov};
 
(ii) for all $s<t$ and $a<b$ images $\psi_{s,t}([a,b])$ are finite \cite[L. 3.2, property {\bf SP4}]{Riabov};
 
(iii) for any $s<t$ and $x\in\mathbb{R}$ there exist $r<t$ and $y\not\in\mathcal{R}_r(\psi)$ such that $\psi_{s,t}(x)=\psi_{r,t}(y)$ \cite[Th. 1.1]{Riabov};

(iv) given rationals $v_1<v_2<v_3$ and $p_1<p_2$ for all large enough  integers $N\geq 1$ and all $j=0,\ldots,N-1$ one has 
$$
v_1<\psi_{q_j,t}(v_2)<v_3, \ q_j\leq t\leq q_{j+1},
$$
where $q_j=p_1+\frac{j(p_2-p_1)}{N},$ $0\leq j\leq N$ \cite[L. 3.2, proof of {\bf SP3}]{Riabov}.

It remains to check that outside a set of probability zero properties {\bf (C2), (C4)} are satisfied. {\bf (C2)} is satisfed on the event 
$$
\bigcap^\infty_{n=1}\bigg(\{\sup_{k\in\mathbb{Z}}\psi_{-n,n}(k)=\infty\}\cap \{\inf_{k\in\mathbb{Z}}\psi_{-n,n}(k)=-\infty\}\bigg).
$$
The probability of the latter event is $1$ since Feller property {\bf (TP1)} and continuity of trajectories {\bf (TP4)} imply
$$
\lim_{x\to\infty}P^{(1)}_t(x,[c,\infty))=1 \mbox{ and }\lim_{x\to-\infty}P^{(1)}_t(x,(-\infty,c])=1
$$ 
for all $t\geq 0$ and $c\in\mathbb{R}$ (the proof is postponed to the appendix).

Condition {\bf (C4)} is satisfies at all points $(s,y)$ with $y\not\in\mathcal{R}_s(\psi),$ because the property {\bf (C5)} holds. So, it is enough to check {\bf (C4)} at all points $(t,x)$ where $x=\psi_{s,t}(y),$ $s<t.$ By the property (iii) above we can assume that $y\not\in\mathcal{R}_s(\psi).$ Moreover, using $\psi_{s,t}(y)=\psi_{s,t}(y+)$ and (ii) we can assume $(s,y)\in\mathbb{Q}^2$ (see the proof of the lemma \ref{lem1}). Thus, it is enough to check that for all $(s,y)\in\mathbb{Q}^2,$ $M\geq 1$ and $\eta\in\mathbb{Q},$ $\eta>0,$
outside an event of probability zero for all $t\in[s,s+M-\eta]$
$$
\psi_{s,t+\eta}(y)\in\{\psi_{t,t+\eta}(\psi_{s,t}(y)-),\psi_{t,t+\eta}(\psi_{s,t}(y)+)\}.
$$
To prove this assertion we adopt an approach of \cite{TW}.  Introduce a set 
$$
A_m=\{\forall u\in [s,s+M+\eta] \forall x\in (\psi_{s,u}(y)-1,\psi_{s,u}(y)+1) \ \ \sup_{t\in [u,s+M+\eta]}|\psi_{u,t}(x)|\leq m\}
$$
(it is measurable since one can restrict $u,x$ to take rational values without changing the event). 

We observe that $A_m\uparrow \Omega,$ $m\to\infty.$ Indeed, for fixed $\omega$ we can find rational numbers $u,v$ such that for all $t\in [s,s+M+\eta]$
$$
u<\psi_{s,t}(y)-1<\psi_{s,t}(y)+1<v.
$$ 
Using the property (iv) above we can find integer $N$ such that for all $j=0,\ldots,N-1$  and $t\in[q_j,q_{j+1}]$
$$
u-2<\psi_{q_j,t}(u-1)<u,  \  v<\psi_{q_j,t}(v+1)<v+2,
$$
where $q_j=s+\frac{j(M+\eta)}{N}.$ By continuity of trajectories there exists  $m\geq 1$ such that for all $j=0,\ldots,N-1$ and $t\in [q_j,M+\eta]$
$$
-m\leq \psi_{q_j,t}(u-1)\leq \psi_{q_j,t}(v+1)\leq m.
$$
By evolutionary property and construction of the points $\{q_0,\ldots,q_N\}$ we get that the event $A_m$ happens.

Further, let $\varepsilon_n,\delta_n\to 0$ be such that 
$$
\frac{f_{-m,m,\eta/2}(8\varepsilon_n)}{w_{-m,m}(\varepsilon_n,\delta_n)}\to 0, \ n\to\infty.
$$
We check that $w_{-m,m}(\varepsilon_n,\delta_n)\to 0.$ Assume it is not the case. Passing to subsequences we may assume that 
$$
\inf_{n\geq 1}w_{-m,m}(\varepsilon_n,\delta_n)>0.
$$
Using the definition of the function $w_{-m,m}$ we find $t>0$ and a sequence $x_n\in[-m,m]$ such that 
$$
P^{(1)}_t(x_n,(x_n-\varepsilon_n,x_n+\varepsilon_n)^c)<\delta_n t.
$$
In particular, 
\begin{equation}
\label{eq19_1}
\lim_{n\to\infty}P^{(1)}_t(x_n,(x_n-\varepsilon_n,x_n+\varepsilon_n)^c)=0.
\end{equation}
Extracting another subsequence we may assume that $x_n\to x\in[-m,m].$ The Feller property implies the weak convergence  \cite[L. 19.3]{Kallenberg}
$$
P^{(1)}_t(x_n,\cdot)\to P^{(1)}_t(x,\cdot), \ n\to\infty.
$$
Now  \eqref{eq19_1} implies that $P^{(1)}_t(x,\{x\})=1$ which contradicts {\bf (TP4)}. So,
$$
\lim_{n\to\infty}w_{-m,m}(\varepsilon_n,\delta_n) =0.
$$

Let $K=M(1+[w_{-m,m}(\varepsilon_n,\delta_n)^{-1}]).$ Consider points $q_j=s+\frac{j M}{K},$ $\xi_j=\psi_{s,q_j}(y),$ $0\leq j\leq K$. Introduce two events
$$
B_{m,n}=\{\forall j\in\{0,\ldots,K-1\}\forall l\in\{-1,0,1\} \ \ \  |\psi_{q_j,q_{j+1}}(\xi_j+2\varepsilon_n l)-(\xi_j+2\varepsilon_n l)|<\varepsilon_n\}
$$
$$
C_{m,n}=\{\exists j\in\{0,\ldots,K-1\} \ \  \psi_{q_j,q_j+\eta/2}(\xi_j-4\varepsilon_n)<\psi_{q_j,q_j+\eta/2}(\xi_j)<\psi_{q_j,q_j+\eta/2}(\xi_j+4\varepsilon_n)\}
$$
For large enough $n,$ we have $\varepsilon_n<\frac{1}{4},$ and $w_{-m,m}(\varepsilon_n,\delta_n)<1.$ Then
$$
\mathbb{P}(A_m\cap C_{m,n})\leq Kf_{-m,m,\frac{\eta}{2}}(8\varepsilon_n)\leq 2M\frac{f_{-m,m,\frac{\eta}{2}}(8\varepsilon_n)}{w_{-m,m}(\varepsilon_n,\delta_n)}
$$
and
$$
\mathbb{P}(A_m\setminus  B_{m,n})\leq 3K\sup_{|x|\leq m}P^{(1)}_{\frac{M}{K}}(x,(x-\varepsilon_n,x+\varepsilon_n)^c)\leq 3M\delta_n,
$$
where the last inequality follows from $\frac{M}{K}<w_{-m,m}(\varepsilon_n,\delta_n)$. Then
$$
\mathbb{P}(A_m\cap B_{m,n}\setminus C_{m,n})\geq \mathbb{P}(A_m)-2M\frac{f_{-m,m,\frac{\eta}{2}}(8\varepsilon_n)}{w_{-m,m}(\varepsilon_n,\delta_n)}-3M\delta_n
$$
and 
$$
\mathbb{P}\bigg(\bigcup_{m\geq 1}\bigg(A_m\cap \limsup_{n\to\infty}\bigg(B_{m,n}\setminus C_{m,n}\bigg)\bigg)\bigg)=1
$$
Assume that the latter event happens, but for some $t\in [s,s+M-\eta]$ we have 
\begin{equation}
\label{eq13_1}
\psi_{t,t+\eta}(\psi_{s,t}(y)-)<\psi_{s,t+\eta}(y)<\psi_{t,t+\eta}(\psi_{s,t}(y)+).
\end{equation}
Let $m$ and $n$ be such that the event $A_m\cap (B_{m,n}\setminus C_{m,n})$ happens and $\frac{M}{K}<\frac{\eta}{2}.$ There is $j\in[0,K-2]$ such that $q_j\leq t<q_{j+1}.$ By the definition of the event $B_{m,n}$
$$
\xi_j-3\varepsilon_n<\psi_{q_j,q_{j+1}}(\xi_j-2\varepsilon_n)<\xi_j-\varepsilon_n<\psi_{q_j,q_{j+1}}(\xi_j)<
$$
$$
<\xi_j+\varepsilon_n<\psi_{q_j,q_{j+1}}(\xi_j+2\varepsilon_n)<\xi_j+3\varepsilon_n.
$$
It follows that 
$$
\psi_{q_j,t}(\xi_j-2\varepsilon_n)<\psi_{q_j,t}(\xi_j)=\psi_{s,t}(y)<\psi_{q_j,t}(\xi_j+2\varepsilon_n).
$$
From \eqref{eq13_1} we deduce 
$$
\psi_{q_j,t+\eta}(\xi_j-2\varepsilon_n)<\psi_{q_j,t+\eta}(\xi_j)=\psi_{s,t+\eta}(y)<\psi_{q_j,t+\eta}(\xi_j+2\varepsilon_n).
$$
Further, 
$$
\xi_{j+1}-4\varepsilon_n<\xi_{j}-3\varepsilon_n<\psi_{q_j,q_{j+1}}(\xi_j-2\varepsilon_n),
$$
$$
\xi_{j+1}+4\varepsilon_n>\xi_{j}+3\varepsilon_n<\psi_{q_j,q_{j+1}}(\xi_j+2\varepsilon_n),
$$
and
$$
\psi_{q_{j+1},t+\eta}(\xi_{j+1}-4\varepsilon_n)\leq \psi_{q_{j+1},t+\eta}(\psi_{q_j,q_{j+1}}(\xi_j-2\varepsilon_n))=
$$
$$
=\psi_{q_j,t+\eta}(\xi_j-2\varepsilon_n)<\psi_{s,t+\eta}(y)=\psi_{q_{j+1},t+\eta}(\xi_{j+1})<
$$
$$
<\psi_{q_{j+1},t+\eta}(\psi_{q_j,q_{j+1}}(\xi_j+2\varepsilon_n))\leq \psi_{q_{j+1},t+\eta}(\xi_{j+1}+4\varepsilon_n)
$$
But $t+\eta>q_{j+1}+\frac{\eta}{2}$ which means that the event $C_{m,n}$ happens. This contradiction shows that outside an event of probability  zero the flow $\{\psi_{s,t}:-\infty<s\leq t<\infty\}$ satisfies all conditions of the definition \ref{def_flow} and can be considered as an $\mathbb{F}-$valued random element.

\end{proof}

\begin{corollary}
\label{cor_thm2} Under assumptions of the theorem \ref{thm3} there is a unique probability measure $\mu$ on the space $(\mathbb{F},\mathcal{A})$ such that $(\mathbb{F},\mathcal{A},\mu,(\theta_h)_{h\in\mathbb{R}})$ is a metric dynamical system, $\varphi$ is  a forward perfect cocycle over $\theta$ that generates a stochastic flow on $\mathbb{R}$ with finite point motions determined by transition probabilities $\{P^{(n)}:n\geq 1\}.$

\end{corollary}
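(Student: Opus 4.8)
The plan is to take for $\mu$ the law of the $\mathbb F$-valued random element supplied by Theorem~\ref{thm3}, and then to deduce the three assertions from general properties of stochastic flows together with the facts established in Section~3. Forming this push-forward on $\mathbb F$ is legitimate precisely because Theorem~\ref{thm3} asserts that the trajectories of that random element lie in $\mathbb F$ outside a null set.

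\emph{Existence.} Let $\psi=\{\psi_{s,t}:-\infty<s\le t<\infty\}$ be the $\mathbb F$-valued random element from Theorem~\ref{thm3}, defined on some $(\Omega,\mathcal F,\mathbb P)$, and put $\mu=\mathbb P\circ\psi^{-1}$ on $(\mathbb F,\mathcal A)$. Under $\mu$ the canonical process $\omega\mapsto\omega_{s,t}(x)$ has the same finite-dimensional distributions as $\psi$, and since $\mathcal A$ is the cylindrical $\sigma$-field it has the same law; hence it is a stochastic flow with finite-point motions $\{P^{(n)}:n\ge1\}$. Here (SF1) is Lemma~\ref{lem2}, (SF2) holds for every $\omega\in\mathbb F$ by condition (C1) of Definition~\ref{def_flow}, and (SF3), being a statement about conditional laws relative to $\sigma$-fields generated by the coordinate maps, is preserved under equality of laws. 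By Corollary~\ref{cor_lem2_2} this canonical flow equals $\psi_{s,t}(\omega,x)=\varphi(t-s,\theta_s\omega,x)=\omega_{s,t}(x)$, so $\varphi$ is a perfect cocycle over $\theta$ generating a stochastic flow on $\mathbb R$ with the required finite-point motions.

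\emph{Metric dynamical system.} The identities $\theta_{s+h}=\theta_s\theta_h$ and $\theta_0=\mathrm{id}$ follow from $(\theta_h\omega)_{s,t}(x)=\omega_{s+h,t+h}(x)$, and joint measurability of $(h,\omega)\mapsto\theta_h\omega$ is Corollary~\ref{cor_lem2_1}, so it remains to show $\mu\circ\theta_h^{-1}=\mu$. Consider the shifted flow $\theta_h\psi$, where $(\theta_h\psi)_{s,t}=\psi_{s+h,t+h}$. It clearly satisfies (SF1)-(SF2), and it satisfies (SF3) because $\mathcal F^{\theta_h\psi}_{-\infty,s}=\mathcal F^{\psi}_{-\infty,s+h}$, so the (SF3) identity for $\theta_h\psi$ at time $s$ is exactly the one for $\psi$ at time $s+h$. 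Thus $\theta_h\psi$ is again a stochastic flow with finite-point motions $\{P^{(n)}\}$, and by the uniqueness of finite-dimensional distributions of such a flow (Remark~\ref{rem15_1} and \cite{Riabov}) it has the same law as $\psi$, i.e. $\mu\circ\theta_h^{-1}=\mu$. Hence $(\mathbb F,\mathcal A,\mu,(\theta_h)_{h\in\mathbb R})$ is a metric dynamical system.

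\emph{Uniqueness and the main obstacle.} If $\mu'$ is any probability measure on $(\mathbb F,\mathcal A)$ with the stated properties, then under $\mu'$ the flow generated by $\varphi$ --- again the canonical flow $\omega\mapsto\omega_{s,t}(x)$ --- is a stochastic flow with finite-point motions $\{P^{(n)}\}$, so its finite-dimensional distributions are determined by $\{P^{(n)}\}$; since the cylinder sets $\{\omega:(\omega_{s_1,t_1}(x_1),\ldots,\omega_{s_k,t_k}(x_k))\in B\}$ form a $\pi$-system generating $\mathcal A$, this forces $\mu'=\mu$. The only delicate points are the identification of past $\sigma$-fields under the time shift (needed for (SF3) to survive translation) and the appeal to uniqueness of finite-dimensional distributions from \cite{Riabov}; the latter --- not anything in this corollary itself --- is where the genuine work lies, and it may be used here as a black box.
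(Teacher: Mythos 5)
Your proof is correct and follows exactly the argument the paper leaves implicit: push forward the law of the $\mathbb{F}$-valued flow from Theorem~\ref{thm3} to get $\mu$, deduce $\theta_h$-invariance and uniqueness from the uniqueness of finite-dimensional distributions of a stochastic flow with prescribed finite-point motions (Remark~\ref{rem15_1}), and read off the cocycle statements from Corollaries~\ref{cor_lem2_1} and~\ref{cor_lem2_2}. Nothing further is needed.
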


\section{Distribution of the dual flow}

As shown in the section 2, the metric dynamical system $(\mathbb{F},\mathcal{A},\mu,(\theta_h)_{h\in\mathbb{R}})$ carries a backward perfect cocylce $\tilde{\varphi},$ such that for every $\omega\in\mathbb{F}$ the backward flow $\{\tilde{\psi}_{t,s}(\omega,\cdot):-\infty<s\leq t<\infty\}$ is dual to the flow $\{\psi_{t,s}(\omega,\cdot):-\infty<s\leq t<\infty\}.$ In this section we prove that $\tilde{\psi}$ is a backward stochastic flow and describe transition semigroups for its finite point motions -- they are dual to transition semigroups of the flow $\psi$ in the sense of \cite[Section 2, \S 3]{Liggett}.

\begin{theorem}
\label{thm4} Under assumptions of the theorem \ref{thm2} there exists a unique compatible  sequence $\{\tilde{P}^{(n)}:n\geq 1\}$ of coalescing Feller transition probabilities on $\mathbb{R}$ such that for all $n\geq 1$ and all $x,y\in\mathbb{R}^n$ with $x_1<y_1<x_2<y_2<\ldots<x_n<y_n,$
\begin{equation}
\label{eq21_2}
\begin{aligned}
\tilde{P}^{(n)}(y,(x_1,x_2)\times(x_2,x_3)\times\ldots\times (x_n,\infty))= \\
=P^{(n)}(x,(-\infty,y_1)\times(y_1,y_2)\times\ldots\times (y_{n-1},y_n)).
\end{aligned}
\end{equation}
Further, $\tilde{\psi}$ is a backward stochastic flow on $\mathbb{R}$ with finite point motions determined by transition probabilities $\{\tilde{P}^{(n)}:n\geq 1\}.$

\end{theorem}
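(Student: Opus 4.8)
I would organize the proof around four steps: define $\tilde P^{(n)}$ as the finite-point distributions of $\tilde\psi$; verify the duality identity \eqref{eq21_2} and conclude that $\{\tilde P^{(n)}\}$ is a compatible sequence of coalescing Feller transition probabilities; prove the Markov property, so that $\tilde\psi$ is a backward stochastic flow with these finite-point motions; and finally deduce uniqueness. For $\tau\ge0$ and $y\in\mathbb R^n$ put $\tilde P^{(n)}_\tau(y,\cdot):=\mathrm{Law}_\mu(\tilde\psi_{\tau,0}(y_1),\dots,\tilde\psi_{\tau,0}(y_n))$; by $\theta$-invariance of $\mu$ and \eqref{eq07_4} this equals $\mathrm{Law}_\mu(\tilde\psi_{t,s}(y_1),\dots,\tilde\psi_{t,s}(y_n))$ whenever $t-s=\tau$. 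Since $\tilde\psi_{t,s}$ is nondecreasing, $\tilde P^{(n)}_\tau(y,\cdot)$ is carried by the ordered cone $\{z_1\le\dots\le z_n\}$ when $y_1\le\dots\le y_n$; condition {\bf (TP2)} is immediate from passing to sub-vectors, and {\bf (TP3)} is trivial because $\tilde\psi_{t,s}$ is a single mapping.

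To prove \eqref{eq21_2}, fix $s<t$, $\tau=t-s$ and reals $x_1<y_1<x_2<\dots<x_n<y_n$, and set $x_{n+1}:=+\infty$, $y_0:=-\infty$. The pathwise duality \eqref{eq15_3}, which holds for every $\omega$ by Theorem \ref{thm2}(2), shows that, up to a $\mu$-null set, the events $E=\bigcap_{i=1}^n\{\tilde\psi_{t,s}(y_i)\in(x_i,x_{i+1})\}$ and $F=\bigcap_{i=1}^n\{\psi_{s,t}(x_i)\in(y_{i-1},y_i)\}$ coincide: on $F$ the inequality \eqref{eq15_3} forces $x_i\le\tilde\psi_{t,s}(y_i)\le x_{i+1}$ and on $E$ it forces $y_{i-1}\le\psi_{s,t}(x_i)\le y_i$, so $F\subseteq\overline E$ and $E\subseteq\overline F$; the boundary set $\overline F\setminus F$ is $\mu$-null by {\bf (TP4)}, and $\overline E\setminus E$ is $\mu$-null because $\tilde\psi_{t,s}(y_i)$ is always a jump point of $\psi_{s,t}$ while $\mu(\psi_{s,t}(c-)<\psi_{s,t}(c+))=0$ for each fixed $c$ — the two-point motion started from $(c-\varepsilon,c+\varepsilon)$ collapses to the diagonal as $\varepsilon\to0$ by {\bf (TP1)},{\bf (TP3)}. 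Therefore
\[
\tilde P^{(n)}_\tau\bigl(y,(x_1,x_2)\times\dots\times(x_n,\infty)\bigr)=\mu(E)=\mu(F)=P^{(n)}_\tau\bigl(x,(-\infty,y_1)\times\dots\times(y_{n-1},y_n)\bigr).
\]
The Feller property {\bf (TP1)} for $\tilde P^{(n)}$ then follows: continuity of $y\mapsto\tilde P^{(n)}_\tau(y,\cdot)$ reduces, via this identity and the fact that the staircase sets generate the Borel $\sigma$-field of the ordered cone, to the Feller property of $P^{(n)}$ together with {\bf (TP4)}; continuity at $\tau=0$ follows from $\tilde\psi_{t,s}(y)\to y$ as $s\uparrow t$ (Lemma \ref{lem 3}(2)), which gives $\tilde P^{(n)}_\tau(y,\cdot)\Rightarrow\delta_y$. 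Thus $\{\tilde P^{(n)}\}$ is a compatible sequence of coalescing Feller transition probabilities satisfying \eqref{eq21_2}.

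The hard part is the Markov property. By Definition \ref{def15_3} it suffices to show that $\psi^{\mathrm{rev}}_{s,t}(\omega,x):=\tilde\psi_{-s,-t}(\omega,x)$ satisfies {\bf (SF3)} with $\{\tilde P^{(n)}\}$ (conditions {\bf (SF1)},{\bf (SF2)} being Theorem \ref{thm2}(1),(2)). I would argue by time reversal. Fix reversed times $s\le t$, put $a=-s$ and $b=-t=a-\tau$, and let $\xi$ be measurable with respect to the past $\mathcal F^{\psi^{\mathrm{rev}}}_{-\infty,s}$; this $\sigma$-field is contained in $\mathcal G:=\sigma(\omega_{p,q}:a\le p\le q)$. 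Now $\tilde\psi_{a,b}(\omega,y)$ equals $v^+_{a,b}(\omega,y)$ or $v^-_{a,b}(\omega,y)$, both determined by the increment $\omega_{b,a}$ (hence by $\mathcal F_{b,a}:=\sigma(\omega_{p,q}:b\le p\le q\le a)$), the choice between them being governed by the left-regularity of $(a,y)$, which by the remark after Definition \ref{def_reg_point} depends only on the germ of $\omega$ at $a$ and is therefore $\mathcal G$-measurable. Since increments over disjoint time intervals are independent (Remark \ref{rem15_1}), $\mathcal F_{b,a}$ is independent of $\mathcal G$. Hence, conditionally on $\mathcal G$, the vector $(\tilde\psi_{-s,-t}(\xi_1),\dots,\tilde\psi_{-s,-t}(\xi_n))$ is distributed as $(v^{\epsilon_1}_{a,b}(z_1),\dots,v^{\epsilon_n}_{a,b}(z_n))$ evaluated on a fresh flow-increment of length $\tau$, with $z=\xi$ and $\epsilon\in\{+,-\}^n$ the frozen regularity data. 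The identification with $\tilde P^{(n)}_\tau(\xi,\cdot)$ now hinges on the lemma that for every fixed $(a,b,z_i)$ one has $\mu(v^+_{a,b}(z_i)\ne v^-_{a,b}(z_i))=0$: indeed $v^+_{a,b}(z_i)\ne v^-_{a,b}(z_i)$ forces $z_i\in\omega_{b,a}(\mathbb R)$ — every point of the range of $\omega_{b,a}$ carries a non-degenerate plateau by {\bf (C4)} — and a fixed point $\mu$-a.s. avoids $\omega_{b,a}(\mathbb R)$ (a consequence of {\bf (TP4)}). Granting this, $(v^{\epsilon_1}_{a,b}(z_1),\dots,v^{\epsilon_n}_{a,b}(z_n))=(v^+_{a,b}(z_1),\dots,v^+_{a,b}(z_n))$ $\mu$-a.s., independently of $\epsilon$, with law $\tilde P^{(n)}_\tau(z,\cdot)$; since $\xi\mapsto\tilde P^{(n)}_\tau(\xi,\cdot)$ is already $\mathcal F^{\psi^{\mathrm{rev}}}_{-\infty,s}$-measurable, {\bf (SF3)} follows, and $\tilde\psi$ is a backward stochastic flow with finite-point motions $\{\tilde P^{(n)}\}$.

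For uniqueness, note that \eqref{eq21_2} is exactly the relation $\mathbb E_y\,h(x,\cdot)=\mathbb E_x\,h(\cdot,y)$ of dual Markov semigroups for the duality function $h(u,v)=\mathbf 1[u_1<v_1<u_2<v_2<\dots<u_n<v_n]$; since any coalescing Feller $\tilde P^{(n)}_\tau(y,\cdot)$ is carried by the ordered cone, on which the family $\{h(\cdot,y)\}_y$ separates probability measures, the semigroup $P^{(n)}$ determines $\tilde P^{(n)}$ (cf. \cite[Section 2, \S 3]{Liggett}). The main obstacle is the Markov-property step: correctly disentangling, under the conditioning on $\mathcal G$, the starting points, the regularity data and the new flow-increment, and — especially — the lemma that a fixed space-time point $\mu$-a.s. lies off the range of the flow, which is what eliminates the $v^+$-versus-$v^-$ ambiguity and makes the finite-point motions of $\tilde\psi$ genuinely Markovian.
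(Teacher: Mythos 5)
Your proposal is correct in substance and follows the same overall skeleton as the paper (define $\tilde P^{(n)}$ as the law of $(\tilde\psi_{t,0}(y_1),\dots,\tilde\psi_{t,0}(y_n))$, prove \eqref{eq21_2} by sandwiching $\tilde\psi_{t,s}$ between $v^-_{t,s}$ and $v^+_{t,s}$ and killing the boundary events, then establish the Markov/Feller structure), but the central step is organized genuinely differently. The paper's engine is the \emph{conditional} interleaving identity \eqref{eq20_1}: for $\xi_1,\dots,\xi_n$ measurable with respect to the future $\sigma$-field of $\psi$, the conditional probability of $\{x_1<\tilde\psi_{t,s}(\xi_1)<x_2<\dots<x_n<\tilde\psi_{t,s}(\xi_n)\}$ equals $P^{(n)}_{t-s}(x,(-\infty,\xi_1)\times\dots\times(\xi_{n-1},\xi_n))$; Chapman--Kolmogorov and the identification of the finite-point motions are then extracted by applying this with $\xi=\tilde\psi_{t+s,s}(y)$ and by an induction on $n$ that decomposes $\mu(\tilde\psi(y_1)>x_1,\dots,\tilde\psi(y_n)>x_n)$ into interleaved events via \eqref{eq21_1}. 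You instead verify \textbf{(SF3)} directly by conditioning on the future increments, using independence of $\omega_{b,a}$ from $\sigma(\omega_{p,q}:p\geq a)$ and the key lemma that $v^+_{a,b}(z)=v^-_{a,b}(z)$ $\mu$-a.s.\ for each fixed $z$ to erase the left-regular/irregular dichotomy after freezing the starting points. Your route makes the probabilistic mechanism (the regularity flag is measurable with respect to the conditioning $\sigma$-field, the generalized inverses live on the independent increment) more transparent, at the cost of more measure-theoretic bookkeeping in the freezing step; the paper's route avoids discussing the regularity flag altogether because the sandwich $v^-\leq\tilde\psi\leq v^+$ plus \textbf{(TP4)} already pins down all the interleaving probabilities. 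Two soft spots in your write-up: (i) the claim $\mu(\overline E\setminus E)=0$, i.e.\ $\mu(\tilde\psi_{t,s}(y_i)=x_i)=0$, is not a consequence of \textbf{(TP1)},\textbf{(TP3)} alone as stated --- weak convergence of $P^{(2)}_t((c-\varepsilon,c+\varepsilon),\cdot)$ to the diagonal controls closed sets, so you must either run the portmanteau argument on the closed set $(-\infty,y]\times[y,\infty)$ and invoke \textbf{(TP4)}, or do as the paper does and use \textbf{(TP6)} (this is exactly \eqref{eq21_1}); (ii) the Feller property of $\tilde P^{(n)}$ is only asserted --- ``staircase sets generate the Borel $\sigma$-field'' gives uniqueness of the measure but not weak continuity in $y$; the paper's explicit computation with $C^1$ test functions and dominated convergence (using \textbf{(TP4)}) is what actually delivers it. Neither issue is structural, and the tools to repair them are among your stated hypotheses.
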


\begin{proof} We observe that the $\sigma-$field $\sigma(\{\tilde{\psi}_{v,u}:s\leq u\leq v\})$ is contained in $\mathcal{F}^\psi_{s,\infty}.$ At first we prove the following relation.

Let  $s\leq t,$ $n\geq 1,$ $\xi_1,\ldots,\xi_n$ are $\mathcal{F}^\psi_{s,\infty}-$measurable random variables  and  $x_1<x_2<\ldots<x_n.$ Then  
\begin{equation}
\label{eq20_1}
\begin{aligned}
\mu(x_1<\tilde{\psi}_{t,s}(\xi_1)<x_2<\tilde{\psi}_{t,s}(\xi_2)<x_3<\ldots<x_n<\tilde{\psi}_{t,s}(\xi_n)|\mathcal{F}^{\psi}_{s,\infty})=\\
=P^{(n)}_{t-s}((x_1,\ldots,x_n),(-\infty,\xi_1)\times (\xi_1,\xi_2)\times\ldots\times (\xi_{n-1},\xi_n)).
\end{aligned}
\end{equation}
Indeed, inequalities $v^-_{t,s}(y)\leq \tilde{\psi}_{t,s}(y)\leq v^+_{t,s}(y)$ imply inclusions
$$
\{\psi_{s,t}(x_1)<\xi_1<\psi_{s,t}(x_2)<\ldots<\xi_{n-1}<\psi_{s,t}(x_n)<\xi_n\}\subset 
$$
$$
\subset \{x_1<\tilde{\psi}_{t,s}(\xi_1)<x_2<\tilde{\psi}_{t,s}(\xi_2)<x_3<\ldots<x_n<\tilde{\psi}_{t,s}(\xi_n)\}\subset
$$
$$
\subset \{\psi_{s,t}(x_1)\leq \xi_1\leq \psi_{s,t}(x_2)\leq \ldots\leq \xi_{n-1}\leq \psi_{s,t}(x_n)\leq \xi_n\}.
$$
The  relation \eqref{eq20_1} then follows from the definition of $\psi$ and the property {\bf (TP4)}.   

Our assumption on the meeting of two trajectories implies that for all $t>0,$ $x,y\in\mathbb{R}$ 
\begin{equation}
\label{eq21_1}
\mu(\tilde{\psi}_{t,0}(y)=x)=0.
\end{equation}
Indeed, by construction of the dual flow and the property {\bf (TP4)}
$$
\mu(\tilde{\psi}_{t,0}(y)=x)\leq \mu(v^-_{t,0}(y)<x+\varepsilon, v^+_{t,0}(y)>x-\varepsilon)\leq 
$$
$$
\leq \mu(\psi_{0,t}(x+\varepsilon)\geq y \geq \psi_{0,t}(x-\varepsilon))\leq 
$$
$$
\leq \mathbb{P}^{(2)}_{x-\varepsilon,x+\varepsilon}(\forall s\in[0,t] \ \  X^{(2)}_1(s)<X^{(2)}_2(s)).
$$
By the property {\bf (TP6)} the latter probability tends to zero as $\varepsilon\to 0.$ 

For every $n\geq 1$ and $t\geq 0$ we introduce a family $\{\tilde{P}^{(n)}_t(y,\cdot):y\in\mathbb{R}^n\}$ of probability measures on $(\mathbb{R}^n,\mathcal{B}(\mathbb{R}^n))$ by 
$$
\tilde{P}^{(n)}_t(y,B)=\mu((\tilde{\psi}_{t,0}(y_1),\ldots,\tilde{\psi}_{t,0}(y_n))\in B), \ B\in\mathcal{B}(\mathbb{R}^n).
$$
Then conditions {\bf (TP2), (TP3)} are satisfied. 

Inductively on $n$ we will check that each $\{\tilde{P}^{(n)}_t:t\geq 0\}$ is a Feller transition probability on $\mathbb{R}^n,$ and that $\tilde{P}^{(n)}_t(y,\cdot)$ is the distribution of $(\tilde{\psi}_{t+h,h}(y_1),\ldots,\tilde{\psi}_{t+h,h}(y_n))$ for all $t,h\in\mathbb{R}$ and $y\in\mathbb{R}^n.$   Consider the case $n=1.$ Using \eqref{eq20_1} with $n=1$ and non-random $\xi=y$ we get 
$$
\mu(\tilde{\psi}_{t,s}(y)>x)=P^{(1)}_{t-s}(x,(-\infty,y))=\mu(\tilde{\psi}_{t-s,0}(y)>x)=\tilde{P}^{(1)}_{t-s}(y,(x,\infty)).
$$
Consequently, for all $h\in\mathbb{R}$ the distribution of $\tilde{\psi}_{t+h,h}(y)$ is  $\tilde{P}^{(1)}_{t}(y,\cdot).$ Further,  applying \eqref{eq20_1} with $\xi=\psi_{t+s,s}(y)$ we get
$$
\tilde{P}^{(1)}_{t+s}(y,(x,\infty))=\mu(\tilde{\psi}_{t+s,0}(y)>x)=\mathbb{E}_\mu \mu(\tilde{\psi}_{s,0}(\tilde{\psi}_{t+s,s}(y))>x|\mathcal{F}^\psi_{s,\infty})=
$$
$$
=\mathbb{E}_\mu P^{(1)}_{s}(x,(-\infty,\tilde{\psi}_{t+s,s}(y)))=
$$
$$
=\int_{\mathbb{R}}P^{(1)}_s(x,(-\infty,z))\tilde{P}^{(1)}_{t}(y,dz)=\int_{\mathbb{R}}\tilde{P}^{(1)}_s(z,(x,\infty))\tilde{P}^{(1)}_{t}(y,dz).
$$
This proves the Chapman-Kolmogorov equation for the family $\tilde{P}^{(1)}.$ In order to check Feller property, consider a continuously differentiable function with compact support $f:\mathbb{R}\to\mathbb{R},$ $\mbox{supp}(f)\subset [a,b].$ From the representation
$$
\int_{\mathbb{R}}f(x)\tilde{P}^{(1)}_t(y,dx)=\int^b_a f'(z)\tilde{P}^{(1)}_t(y,(z,\infty))dz=
$$
$$
=\int^b_a f'(z)P^{(1)}_t(z,(-\infty,y))dz,
$$
the property {\bf (TP4)} and the dominated convergence theorem, we deduce that the function 
$$
y\to \int_{\mathbb{R}}f(x)\tilde{P}^{(1)}_t(y,dx)
$$
belongs to $C_0(\mathbb{R}).$  By a standard density argument,  $\tilde{P}^{(1)}$ is a Feller transition probability on $\mathbb{R}.$ 

Assume that the result is proved for all $k\leq n-1.$ Let $y\in\mathbb{R}^n.$ By consistency property {\bf (TP2)} and coaelscing condition {\bf (TP3)} it is enough to consider the case $y_1<y_2<\ldots<y_n.$ We prove that the law of $(\tilde{\psi}_{t+h,h}(y_1),\ldots,\tilde{\psi}_{t+h,h}(y_n))$ is $\tilde{P}^{(n)}_t(y,\cdot)$ once we check that for any $x\in\mathbb{R}^n$
$$
\mu(\tilde{\psi}_{t+h,h}(y_1)>x_1,\ldots,\tilde{\psi}_{t+h,h}(y_n)>x_n)=\tilde{P}^{(n)}_t(y,\prod^n_{j=1}(x_j,\infty)).
$$
By monotonicity of trajectories and inductive assumption it is enough to consider the case $x_1<x_2<\ldots<x_n.$ Equation \eqref{eq21_1} and monotonicity of trajectories implies the representation
$$
\mu(\tilde{\psi}_{t+h,h}(y_1)>x_1,\ldots,\tilde{\psi}_{t+h,h}(y_n)>x_n)=\mu(\tilde{\psi}_{t+h,h}(y_1)>x_n)+
$$
$$
+\mu(x_{n-1}<\tilde{\psi}_{t+h,h}(y_1)<x_n<\tilde{\psi}_{t+h,h}(y_n))+\ldots+
$$
$$
+\mu(x_{1}<\tilde{\psi}_{t+h,h}(y_1)<x_2<\tilde{\psi}_{t+h,h}(y_2)<\ldots<x_n<\tilde{\psi}_{t+h,h}(y_n)).
$$
Now all assertions follow from inductive assumption and \eqref{eq20_1} similarly to the case $n=1.$

\end{proof}

\section{Example. Arratia flows with drift}

In this section we apply the developed constructions to the Arratia flow with drift. At first we recall the construction of  corresponding transition probabilities (see \cite{LeJanRaimond, DV, Riabov} for details). 

Let $a:\mathbb{R}\to\mathbb{R}$ be a Lipschitz function with the Lipschitz constant $L:$
\begin{equation}
	\label{eq14_1}
|a(x)-a(y)|\leq L|x-y|, \ x,y\in\mathbb{R}.
\end{equation}
 Consider a system of stochastic differential equations
\begin{equation}
	\label{eq14_1}
	\begin{cases}
	dX_1(t)=a(X_1(t))dt+dW_1(t), \\
	\ldots, \\
	dX_n(t)=a(X_n(t))dt+dW_n(t),
	\end{cases}
\end{equation}
where $w_1,\ldots,w_n$ are independent Wiener processes. For every initial value $x\in\mathbb{R}^n$ there is a unique strong solution of \eqref{eq14_1}. By $P^{(n),ind.}$ we denote a corresponding transition probability. Transition probabilities for the Arratia flow with drift are constructed from $\{P^{(n),ind.}:n\geq 1\}$ by coalescing finite-point motions at a meeting time. Formally this is done in the following theorem from \cite{LeJanRaimond} (see also \cite[L. 4.1]{Riabov}).

\begin{theorem}
\label{lem18_1}
\cite[Th. 4.1]{LeJanRaimond} There exists a unique compatible sequence  $\{P^{(n)}:n\geq 1\}$ of coalescing Feller transition probabilities that satisfy the following property.

Consider a starting point $x=(x_1,\ldots,x_n)\in\mathbb{R}$ and two $\mathbb{R}^n-$valued processes: $\{Y(t):t\geq 0\}$ -- a Feller process with starting point $x$ and transition probabilities $\{{P}^{(n),ind.}:t\geq 0\},$ and  $\{X(t):t\geq 0\}$ -- a Feller process with starting point $x$ and transition probabilities $\{P^{(n)}:t\geq 0\}.$ Further, let 
$$
\tau_Y=\inf\{t\geq 0:\exists i<j \ Y_i(t)=Y_j(t)\}, \tau_X=\inf\{t\geq 0:\exists i<j \ X_i(t)=X_j(t)\},
$$ 
be first meeting times for trajectories of processes $Y$ and $X,$ correspondingly. Then stopped processes $\{Y(t\wedge \tau_Y):t\geq 0\}$ and $\{X(t\wedge \tau_X):t\geq 0\}$ are identically distributed.

\end{theorem}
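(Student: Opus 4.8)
The plan is to establish existence by an explicit inductive construction on the number $n$ of particles, and uniqueness by the strong Markov property together with conditions {\bf (TP2)}, {\bf (TP3)}.

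\textbf{Existence.} I would set $P^{(1)}=P^{(1),ind.}$ and, assuming $P^{(1)},\ldots,P^{(n-1)}$ constructed, proceed as follows. For a starting point $x\in\mathbb{R}^n$ whose coordinates are not pairwise distinct, {\bf (TP2)} and {\bf (TP3)} force the law of the $n$-point motion to be the image, under duplication of coordinates, of a $k$-point motion with $k<n$ the number of distinct values; so assume $x_1<\ldots<x_n$. Let $Y$ be the strong solution of the independent system \eqref{eq14_1} with $Y(0)=x$ and $\tau_Y$ its first meeting time. On $\{\tau_Y=\infty\}$ put $X=Y$; on $\{\tau_Y<\infty\}$ the equivalence ``$i\sim j$ iff $Y_i(\tau_Y)=Y_j(\tau_Y)$'' has strictly fewer than $n$ blocks, so one merges the coalescing coordinates, applies the strong Markov property of $Y$ at $\tau_Y$, restarts a motion of the merged particles according to the already-constructed lower-order semigroup, and copies the common value to all indices within each block. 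Concatenating the two pieces gives a process $X$; the strong Markov property and pathwise uniqueness for \eqref{eq14_1} make $P^{(n)}_t(x,\cdot):=\mathrm{Law}(X(t))$ a transition function, measurable in $x$ by measurable dependence of strong solutions on the initial datum. By construction $\{X(t\wedge\tau_X)\}$ has the same law as $\{Y(t\wedge\tau_Y)\}$, which is the asserted property, and {\bf (TP3)} holds since merged coordinates stay merged.

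\textbf{Feller property and compatibility.} To obtain {\bf (TP1)} I would realize the solutions from all starting points on one probability space, driving \eqref{eq14_1} by a single family of Wiener processes; Gronwall's inequality gives $Y^{x^{(k)}}\to Y^{x}$ uniformly on compacts, a.s., whenever $x^{(k)}\to x$. The coalesced path is $X=\Phi(Y)$ for a deterministic functional $\Phi$ obtained by iterating ``run until the path touches a diagonal, identify the touching coordinates, project, repeat''. The functional $\Phi$ is continuous at every continuous path that genuinely crosses each diagonal it reaches rather than touching it tangentially; since each coordinate difference $Y_i-Y_j$ is a continuous semimartingale with nondegenerate Brownian part, it a.s.\ changes sign in every neighbourhood of any of its zeros, so the exceptional set has probability zero. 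Hence $X^{x^{(k)}}\to X^{x}$ a.s., and $P^{(n)}_tf(x^{(k)})\to P^{(n)}_tf(x)$ for $f\in C_0(\mathbb{R}^n)$ by dominated convergence; the bound $|X_i(t)-x_i|\le\sup_{s\le t}|Y_i(s)-x_i|$ and the Feller property of the independent motion give $P^{(n)}_tf\in C_0(\mathbb{R}^n)$ and strong continuity as $t\to 0$. Condition {\bf (TP2)} follows because the independent system is consistent under coordinate projections and the coalescing rule commutes with forgetting coordinates: merged particles move together, so deleting indices and then coalescing yields the same law as coalescing and then deleting indices.

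\textbf{Uniqueness.} Let $\{Q^{(n)}\}$ be another compatible sequence of coalescing Feller transition probabilities with the stated stopped-process property; I would show $Q^{(n)}=P^{(n)}$ by induction on $n$, the case $n=1$ being the hypothesis $Q^{(1)}=P^{(1),ind.}$. For $n\ge 2$ and a starting point with distinct coordinates, the hypothesis forces the law of the $Q$-process up to its first meeting time to coincide with that of the independent motion stopped at $\tau_Y$; Feller processes are strong Markov, so applying the strong Markov property at that meeting time and using {\bf (TP2)}, {\bf (TP3)} to reduce to a motion of strictly fewer distinct particles, the inductive hypothesis identifies the post-meeting law. Piecing the two parts together determines all finite-dimensional distributions, hence $Q^{(n)}=P^{(n)}$. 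The step I expect to be the main obstacle is the Feller property — specifically the continuity of the coalescing functional $\Phi$, i.e.\ the assertion that almost every realization of the coordinate differences crosses rather than merely touches each level it attains, so that nearby starting configurations produce nearby coalescing patterns; the inductive bookkeeping of blocks, the measurability claims, and the strong Markov reductions are routine once pathwise uniqueness for \eqref{eq14_1} is in hand.
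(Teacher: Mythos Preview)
The paper does not prove this theorem: it is stated with attribution to \cite[Th.~4.1]{LeJanRaimond} (and the parenthetical ``see also \cite[L.~4.1]{Riabov}'') and then used as a black box to define the Arratia flow with drift in Definition~\ref{def14_1}. There is therefore no proof in the paper to compare your proposal against.

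For what it is worth, your sketch---inductive construction by running the independent particles to the first meeting time, merging the colliding coordinates, and restarting with the already-built lower-order semigroup; uniqueness via the strong Markov property together with {\bf (TP2)} and {\bf (TP3)}---is the standard route and is essentially how the cited references proceed. Your identification of the Feller property, and in particular the a.s.\ continuity of the coalescing functional at paths whose coordinate differences genuinely cross (rather than merely touch) the diagonal, as the only non-routine step is accurate.
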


\begin{definition}
\label{def14_1} A stochastic flow $\psi$ is an Arratia flow with drift $a,$ if its finite-point motions are determined by transition probabilities $P^{(n)}$ from the theorem \ref{lem18_1}. 

\end{definition}

Throughout this section  $\{P^{(n)}:n\geq 1\}$  denote the sequence of transition probabilities for finite-point motions of the Arratia flow with drift $a.$ By $\mathbb{P}^{(n)}_x$ we denote the distribution in $C([0,\infty),\mathbb{R}^n)$ of $n$ trajectories from the Arratia flow with drift $a$. Properties {\bf (TP4)-(TP6)} for semigroups $P^{(n)}$ were proved  in \cite[Section 4.1]{Riabov}. In the next lemma we prove  the property {\bf (TP7)} and the condition \eqref{eq21_3} from the theorem \ref{thm1}. Recall the  function 
$$
w_{\alpha,\beta}(\varepsilon,\delta)=\inf\{t>0:\sup_{x\in[\alpha,\beta]}P^{(1)}_t(x,(x-\varepsilon,x+\varepsilon)^c)\geq \delta t\}
$$
defined in \eqref{eq08_1}. To apply the theorem \ref{thm1} we need an estimate on the asymptotic behaviour of the function $w_{\alpha,\beta}.$ We do this by comparing $w_{\alpha,\beta}$ with the function 
$$
g(x)=\sqrt{\frac{2}{\pi}}x^2\int^\infty_x e^{-\frac{z^2}{2}}dz.
$$
There exists $x_*>0$ such that $g$ is strictly increasing  on $[0,x_*]$ and strictly decreasing on $[x_*,\infty).$ 
Let 
$$
g^{-1}:(0,g(x_*)]\to [x_*,\infty)
$$
be the inverse of $g.$ Asymptotics of $g$ is well known \cite[L. 1.1.3]{Bogachev}:
$$
g(x)\sim \sqrt{\frac{2}{\pi}}x e^{-\frac{x^2}{2}}, \ x\to\infty.
$$
Consequently,
\begin{equation}
\label{eq19_2}
g^{-1}(\varepsilon)\sim \sqrt{2|\ln \varepsilon|}, \ \varepsilon\to 0.
\end{equation}
 
\begin{lemma}
\label{lem14_1} Consider arbitary $\alpha<\beta$ and $t>0.$

\begin{enumerate}

\item For any $p\in(1,\frac{3}{2})$ there is a constant $C=C(\alpha,\beta,t,p)>0$ such that for any reals  $x_1,x_2,x_3$ 
$$
\mathbb{P}^{(3)}_{x_1,x_2,x_3}(\forall s\in[0,t] \ \alpha\leq X^{(3)}_1(s)<X^{(3)}_2(s)<X^{(3)}_3(s)\leq \beta)\leq C (x_3-x_1)^{\frac{3}{p}}
$$

\item Let $M=\sup_{\alpha\leq x\leq \beta}|a(x)|.$ Assume that $\varepsilon,\delta>0$ be such that $\varepsilon^2 \delta<32 g(x_*),$ $\varepsilon <\frac{4(1+M)\log 2}{L}$ and
$$
\bigg(\frac{\varepsilon}{4g^{-1}(\frac{\varepsilon^2 \delta}{32})}\bigg)^2<\frac{\varepsilon}{4(1+M)}.
$$
Then 
$$
w_{\alpha,\beta}(\varepsilon,\delta)\geq \bigg(\frac{\varepsilon}{4g^{-1}(\frac{\varepsilon^2 \delta}{32})}\bigg)^2.
$$

\end{enumerate}

\end{lemma}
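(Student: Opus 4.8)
The plan is to prove the two parts by unrelated methods. Part 1 I would obtain by a Girsanov change of measure reducing the drifted three-point motion to three non-colliding Brownian motions, whose non-collision probability has the sharp cubic decay in the initial spread. Part 2 I would get from a pathwise Gr\"onwall comparison of the drifted diffusion with a Wiener process, after which the reflection principle and the Gaussian tail expressed through $g$ finish the job.

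For part 1, write $A$ for the event in the statement. On $A$ all three trajectories stay in $[\alpha,\beta]$, so the particles never coalesce on $[0,t]$, and therefore $\mathbb P^{(3)}_{x_1,x_2,x_3}(A)$ is unchanged if computed for the independent system~\eqref{eq14_1}, and unchanged again if $a$ is replaced by a bounded Lipschitz function $\bar a$ equal to $a$ on $[\alpha,\beta]$ (pathwise uniqueness: the two solutions agree until they leave $[\alpha,\beta]$). Under the driftless law $X^{(3)}_i(s)=x_i+W_i(s)$ with independent Wiener processes, and by Girsanov's theorem the law of the $\bar a$-system on $\mathcal F_t$ has, with respect to it, a density $Z_t=\exp\big(\sum_{i=1}^3\int_0^t\bar a(X^{(3)}_i)\,dW_i-\tfrac12\sum_{i=1}^3\int_0^t\bar a(X^{(3)}_i)^2\,ds\big)$ with $\mathbb E[Z_t^q]\le e^{c_q t}<\infty$ for every $q\ge1$. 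Dropping from $A$ the constraint $X^{(3)}\in[\alpha,\beta]$, we have $A\subseteq B:=\{\forall s\le t:\ x_1+W_1(s)<x_2+W_2(s)<x_3+W_3(s)\}$, so with $q=p/(p-1)$ H\"older's inequality gives $\mathbb P^{(3)}_{x_1,x_2,x_3}(A)\le\|Z_t\|_{L^q}\,\mathbb P(B)^{1/p}$. Finally $\mathbb P(B)$ is the probability that three Brownian motions from $x_1<x_2<x_3$ do not collide before time $t$, and this is classically at most $C(t)\prod_{i<j}(x_j-x_i)\le C(t)(x_3-x_1)^3$ (Karlin--McGregor formula / Brownian motion in the Weyl chamber $\{u_1<u_2<u_3\}$, with the Vandermonde factor as the leading order when the gaps shrink). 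Raising to the power $1/p$ gives the exponent $3/p$; $p\in(1,\tfrac32)$ is exactly the range keeping $q$ finite and $3/p$ above $2$.

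For part 2 set $t_0:=\big(\varepsilon/(4g^{-1}(\varepsilon^2\delta/32))\big)^2$; by the definition of $w_{\alpha,\beta}$ it suffices to prove $P^{(1)}_t(x,(x-\varepsilon,x+\varepsilon)^c)<\delta t$ for all $x\in[\alpha,\beta]$ and all $t<t_0$. Let $X$ solve $dX=a(X)\,dt+dW$, $X(0)=x$. From $|a(X(s))|\le M+L|X(s)-x|$ (Lipschitz property and $x\in[\alpha,\beta]$) one obtains, for $r\le t$, $|X(r)-x|\le Mt+\sup_{s\le t}|W(s)|+L\int_0^r|X(s)-x|\,ds$, hence by Gr\"onwall $|X(t)-x|\le\big(Mt+\sup_{s\le t}|W(s)|\big)e^{Lt}$. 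The hypotheses $\varepsilon<4(1+M)\log2/L$ and $t_0<\varepsilon/(4(1+M))$ force $Lt<\log2$ and $Mt<\varepsilon/4$ whenever $t<t_0$, so $\varepsilon e^{-Lt}-Mt>\varepsilon/4$ and therefore $\{|X(t)-x|\ge\varepsilon\}\subseteq\{\sup_{s\le t}|W(s)|>\varepsilon/4\}$. The reflection principle and the definition of $g$ then give $\mathbb P(\sup_{s\le t}|W(s)|>\varepsilon/4)\le4\,\mathbb P(W(t)>\varepsilon/4)=\tfrac{32t}{\varepsilon^2}\,g\big(\varepsilon/(4\sqrt t)\big)$. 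Because $\varepsilon^2\delta<32g(x_*)$, the number $g^{-1}(\varepsilon^2\delta/32)\ge x_*$ is well defined, and for $t<t_0$ one has $\varepsilon/(4\sqrt t)>g^{-1}(\varepsilon^2\delta/32)\ge x_*$; since $g$ is strictly decreasing on $[x_*,\infty)$ this yields $g(\varepsilon/(4\sqrt t))<\varepsilon^2\delta/32$, and combining the last two facts gives $P^{(1)}_t(x,(x-\varepsilon,x+\varepsilon)^c)\le\tfrac{32t}{\varepsilon^2}g(\varepsilon/(4\sqrt t))<\delta t$, uniformly in $x\in[\alpha,\beta]$. Hence $w_{\alpha,\beta}(\varepsilon,\delta)\ge t_0$.

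I expect the only genuine difficulty to be the non-colliding Brownian estimate with a constant depending on $t$ alone; I would read it off the explicit Weyl-chamber transition density $q_t(u,v)=\det(p_t(u_i-v_j))_{i,j}$ by checking $|q_t(u,v)|\le C(t)\prod_{i<j}(u_j-u_i)\,\Phi_t(v)$ for an integrable $\Phi_t$ and integrating in $v$; if that turns out awkward, an alternative is to condition on the middle trajectory and apply the two-point bound \textbf{(TP6)} to the two outer gaps, which gives a slightly weaker power that still lies in $(2,3)$ and so still fits $p\in(1,\tfrac32)$. Everything in part 2 is elementary once the Gr\"onwall comparison is in place; the three numerical hypotheses are consumed precisely in securing $e^{Lt}<2$, $Mt<\varepsilon/4$, and the well-definedness of $g^{-1}(\varepsilon^2\delta/32)$.
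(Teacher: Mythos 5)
Your proposal is correct and follows essentially the same route as the paper: part 1 via Girsanov plus H\"older, reducing to the cubic non-collision bound for three independent Brownian motions (the paper quotes this from O'Connell--Unwin rather than Karlin--McGregor, but it is the same estimate), and part 2 via the Gr\"onwall comparison $|X(t)-x|\le(\xi+tM)e^{Lt}$, the reflection principle, and the monotonicity of $g$ on $[x_*,\infty)$. The only cosmetic difference is that you localize the drift to a bounded $\bar a$ before applying Girsanov, whereas the paper keeps the indicator $1_A$ inside the expectation to the same effect.
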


\begin{proof} 

\begin{enumerate}

\item As above, $M=\sup_{\alpha\leq x\leq \beta}|a(x)|.$ Denote the event of interest by 
$A,$
$$
A=\{f\in C([0,\infty),\mathbb{R}^3):\forall s\in[0,t] \ \  \alpha\leq f_1(s)<f_2(s)<f_3(s)\leq \beta\}.
$$
Also, let $Q_{x_1,x_2,x_3}$ be the Wiener measure, i.e. the distribution in $C([0,\infty),\mathbb{R}^3)$ of the process $w(t)=(w_1(t),w_2(t),w_3(t)),$ where $w_1,w_2,w_3$ are independent Wiener processes, $w_j(0)=x_j,$ $1\leq j\leq 3.$ By the Girsanov theorem and the H{\H o}lder inequality,
$$
\mathbb{P}^{(3)}_{x_1,x_2,x_3}(\forall s\in[0,t] \ \alpha\leq X^{(3)}_1(s)<X^{(3)}_2(s)<X^{(3)}_3(s)\leq \beta)=
$$
$$
=\mathbb{E}_{Q_{x_1,x_2,x_3}}1_A e^{\sum^3_{j=1}(\int^t_0 a(w_j(s))dw_j(s)-\frac{1}{2}\int^t_0 a(w_j(s))^2ds)}\leq 
$$
$$
\leq Q_{x_1,x_2,x_3}(A)^{\frac{1}{p}}\bigg(\mathbb{E}_{Q_{x_1,x_2,x_3}}1_A e^{\sum^3_{j=1}(\int^t_0 qa(w_j(s))dw_j(s)-\frac{q}{2}\int^t_0 a(w_j(s))^2ds)}\bigg)^{\frac{1}{q}}\leq 
$$
$$
\leq e^{\frac{3}{2}M(q-1)}Q_{x_1,x_2,x_3}(A)^{\frac{1}{p}}\bigg(\mathbb{E}_{Q_{x_1,x_2,x_3}}1_A e^{\sum^3_{j=1}(\int^t_0 qa(w_j(s))dw_j(s)-\frac{1}{2}\int^t_0 (qa(w_j(s)))^2ds)}\bigg)^{\frac{1}{q}}\leq 
$$
$$
\leq e^{\frac{3}{2}M(q-1)}Q_{x_1,x_2,x_3}(A)^{\frac{1}{p}}\leq e^{\frac{3}{2}M(q-1)} C(x_3-x_1)^{\frac{3}{p}},
$$
where the last inequality follows from \cite[Section 3]{CU}.

\item  Let $x\in [\alpha,\beta].$ Consider positive $t<(\frac{\varepsilon}{4g^{-1}(\frac{\varepsilon^2 \delta}{32})})^2.$  Denote by $\{W(t):t\geq 0\}$ a Wiener process starting from zero, and let $\{X(t):t\geq 0\}$ be a solution of the stochastic differential equation
$$
\begin{cases}
dX(t)=a(X(t))dt+dW(t) \\
X(0)=x
\end{cases}.
$$
Then $\{X(t):t\geq 0\}$ is a Feller process with initial value $X(0)=x$ and transition probability $\{P^{(1)}_t:t\geq 0\}.$ Denote $\xi=\max_{[0,t]}|W|.$ For every $s\in[0,t]$ we have 
$$
|X(s)-x|=\bigg|\int^s_0 a(X(r))dr+W(s) \bigg|\leq |W(s)|+s|a(x)|+\int^s_0|a(X(r))-a(x)|dr\leq 
$$
$$
\leq \xi +tM+L\int^s_0 |X(r)-x|dr.
$$
By Gronwall inequality,
$$
|X(t)-x|\leq (\xi+tM)e^{Lt}.
$$
From the assumptions on $\varepsilon$ and $\delta,$
$$
t<\frac{\varepsilon}{4(1+M)}<\frac{\log 2}{L}.
$$
Hence,
$$
|X(t)-x|\leq 2(\xi+tM)<2\xi +\frac{\varepsilon}{2}.
$$
It follows that 
$$
P^{(1)}_t(x,(x-\varepsilon,x+\varepsilon)^c)=\mathbb{P}(|X(t)-x|\geq \varepsilon)\leq \mathbb{P}\bigg(\xi\geq \frac{\varepsilon}{4}\bigg);
$$
$$
=2\mathbb{P}\bigg(|W(t)|\geq \frac{\varepsilon}{4}\bigg)=\frac{32 t}{\varepsilon^2}g\bigg(\frac{\varepsilon}{4\sqrt{t}}\bigg)
$$
By assumption
$$
\frac{\varepsilon}{4\sqrt{t}}>g^{-1}\bigg(\frac{\varepsilon^2 \delta}{32}\bigg).
$$
So,
$$
\frac{1}{t}\sup_{x\in[\alpha,\beta]}P^{(1)}_t(x,(x-\varepsilon,x+\varepsilon)^c)<\delta.
$$
Since the latter is true for any $t<(\frac{\varepsilon}{4g^{-1}(\frac{\varepsilon^2 \delta}{32})})^2$ we deduce that 
$$
w_{\alpha,\beta}(\varepsilon,\delta)\geq  \bigg(\frac{\varepsilon}{4g^{-1}(\frac{\varepsilon^2 \delta}{32})}\bigg)^2.
$$

\end{enumerate}
\end{proof}

\begin{corollary} 
\label{cor14_1} There exists a metric dynamical system $(\mathbb{F},\mathcal{A},\mu,(\theta_h)_{h\in\mathbb{R}}),$ a perfect cocycle $\varphi$ and a backward perfect cocycle $\tilde{\varphi},$ such that  On a suitable probability space there exist a forward stochastic flow $\psi$ and a backward stochastic flow  $\tilde{\psi}$ such that

\begin{enumerate}
\item the flow $\psi_{s,t}(\omega,x)=\varphi(t-s,\theta_s\omega,x)$ is the Arratia flow with drift $a;$

\item the backward flow $\tilde{\psi}_{t,s}(\omega,x)=\tilde{\varphi}(t-s,\theta_s\omega,x)$ is a backward Arratia flow with drift $-a;$

\item the backward stochastic flow $\tilde{\psi}$ is dual to the stochastic flow $\psi.$

\end{enumerate}

\end{corollary}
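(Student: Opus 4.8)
The plan is to check that the finite-point motions $\{P^{(n)}:n\geq 1\}$ of the Arratia flow with drift $a$ satisfy all hypotheses of Theorem~\ref{thm1}, apply that theorem, and then identify the dual. Conditions \textbf{(TP1)--(TP3)} are the conclusion of Theorem~\ref{lem18_1}, and \textbf{(TP4)--(TP6)} are proved in \cite[Section~4.1]{Riabov}; for \textbf{(TP7)} I would invoke Lemma~\ref{lem14_1}(1) with any fixed $p\in(1,\tfrac32)$, which permits the choice
$$
f_{a,b,t}(r)=C(a,b,t,p)\,r^{3/p},\qquad r>0,
$$
a positive (and increasing) function, since $3/p>2$.

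Next I would verify the quantitative condition \eqref{eq21_3}. By Lemma~\ref{lem14_1}(2), for all small enough $\varepsilon,\delta>0$ one has $w_{a,b}(\varepsilon,\delta)\ge\big(\varepsilon/(4g^{-1}(\varepsilon^2\delta/32))\big)^2$, so, using the asymptotics \eqref{eq19_2} of $g^{-1}$,
$$
\frac{f_{a,b,t}(8\varepsilon)}{w_{a,b}(\varepsilon,\delta)}\;\le\;16\cdot 8^{3/p}\,C(a,b,t,p)\,\varepsilon^{3/p-2}\big(g^{-1}(\varepsilon^2\delta/32)\big)^2\;=\;O\!\big(\varepsilon^{3/p-2}\,|\ln(\varepsilon^2\delta)|\big),
$$
which tends to $0$ along, say, $\delta=\varepsilon\downarrow 0$ because $3/p-2>0$. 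Hence \eqref{eq21_3} holds and Theorem~\ref{thm1} applies: it produces the metric dynamical system $(\mathbb{F},\mathcal{A},\mu,(\theta_h)_{h\in\mathbb{R}})$, a perfect cocycle $\varphi$ and a backward perfect cocycle $\tilde\varphi$ such that $\psi_{s,t}(\omega,x)=\varphi(t-s,\theta_s\omega,x)$ is a stochastic flow with finite-point motions $\{P^{(n)}\}$ — an Arratia flow with drift $a$ by Definition~\ref{def14_1}, which is item~(1) — and $\tilde\psi_{t,s}(\omega,x)=\tilde\varphi(t-s,\theta_s\omega,x)$ is a backward stochastic flow dual to $\psi$, which is item~(3). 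It remains only to identify the finite-point motions $\{\tilde P^{(n)}\}$ of $\tilde\psi$, which Theorem~\ref{thm1} characterizes as the unique compatible sequence of coalescing Feller transition probabilities obeying the duality relation \eqref{eq21_2} with $\{P^{(n)}\}$.

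For item~(2) I would argue as follows. Fix $(t,s,y)$. Since $\tilde\psi(\omega)$ is dual to $\psi(\omega)$ for every $\omega$, Lemma~\ref{lem 3}(3) gives $v^-_{t,s}(\omega,y)\le\tilde\psi_{t,s}(\omega,y)\le v^+_{t,s}(\omega,y)$; and if $v^-_{t,s}(\omega,y)<v^+_{t,s}(\omega,y)$, then choosing a rational $x$ strictly between them and using monotonicity of $\omega_{s,t}=\psi_{s,t}(\omega,\cdot)$ forces $\psi_{s,t}(\omega,x)=y$, an event of $\mu$-probability $0$ by \textbf{(TP4)}; summing over rational $x$ yields $v^-_{t,s}(y)=v^+_{t,s}(y)$, hence $\tilde\psi_{t,s}(y)=v^+_{t,s}(y)$, $\mu$-a.s. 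Consequently $\tilde P^{(n)}_t(y,\cdot)$ is the law of $(v^+_{t,0}(y_1),\dots,v^+_{t,0}(y_n))$, i.e. a time-$t$ distribution of the $n$-point motion of the right-continuous generalized inverse of the Arratia flow with drift $a$. By the description of that generalized inverse — which is precisely what \cite{TW, DRS} establish, and which is visible already at the one-dimensional level from the time-reversal computation showing that $v^+_{t,\cdot}(y)$ evolves, in the reversed time, with drift $-a$ — the process $s\mapsto(v^+_{t,s}(y_1),\dots,v^+_{t,s}(y_n))$ runs (in reversed time) as a system of diffusions with generator $\sum_i\big(\tfrac12\partial_i^2-a(\cdot)\partial_i\big)$ that move independently until they meet and then coalesce. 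Thus $\{\tilde P^{(n)}\}$ is the sequence of transition probabilities of the Arratia flow with drift $-a$, proving item~(2).

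I expect the main obstacle to be exactly this last identification, not the routine verification of hypotheses. A self-contained proof would instead use the characterization in Theorem~\ref{thm4}: it suffices to check that the transition probabilities $\bar P^{(n)}$ of the Arratia flow with drift $-a$ satisfy \eqref{eq21_2}, for then uniqueness forces $\tilde P^{(n)}=\bar P^{(n)}$. On the interlacing configurations $x_1<y_1<x_2<\dots<x_n<y_n$, membership in the regions of \eqref{eq21_2} forces the $n$-point motion to stay strictly ordered on $[0,t]$, so by the ``coalesce at the first meeting'' property of Theorem~\ref{lem18_1} both sides of \eqref{eq21_2} reduce to non-collision probabilities for the corresponding systems of \emph{independent} diffusions $dX_i=a(X_i)\,dr+dW_i$ and $dZ_i=-a(Z_i)\,ds+dB_i$. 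The resulting identity is the Karlin--McGregor-type duality between these two diffusion systems killed at the first collision; its core is the one-dimensional duality $\mathbb{P}_x(X^a_t<y)=\mathbb{P}_y(Z^{-a}_t>x)$ (the reflection principle, or a Girsanov reduction to the Brownian case as in Lemma~\ref{lem14_1}) together with the determinantal representation of the killed $n$-point kernels. This is the substantive content that the corollary ``recovers and strengthens'' from \cite{TW, DRS}.
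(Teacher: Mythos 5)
Your verification of the hypotheses of Theorem \ref{thm1} (conditions {\bf (TP7)} and \eqref{eq21_3} via Lemma \ref{lem14_1} and the asymptotics \eqref{eq19_2}) is exactly the paper's argument, modulo the cosmetic choice of the sequence $\varepsilon_n,\delta_n$, and items (1) and (3) then follow as you say. Where you diverge is the identification of the dual's law in item (2). The paper stays at the level of the one-point motion: writing $\tilde{p}_t(y,x)=-\int_{-\infty}^{y}\partial_x p_t(x,z)\,dz$ (which is \eqref{eq21_2} for $n=1$ in density form), it checks that $\tilde{p}$ solves the Kolmogorov equation with drift $-a$, so the one-point dual motion is the $-a$ diffusion, and then disposes of the $n$-point case by observing that ``independence before meeting'' transfers through \eqref{eq21_2} from the forward flow — i.e.\ it matches $\tilde{P}^{(n)}$ against the characterization of the $-a$ Arratia semigroup in Theorem \ref{lem18_1} rather than computing $\tilde{P}^{(n)}$ explicitly. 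Your first route (identify $\tilde{\psi}$ with $v^{+}$ a.s.\ and quote \cite{TW,DRS}) is sound as far as it goes — the a.s.\ equality $v^{-}_{t,s}(y)=v^{+}_{t,s}(y)$ via {\bf (TP4)} is correct — but it outsources precisely the fact the corollary is meant to recover. Your second route (verify \eqref{eq21_2} for the $-a$ Arratia semigroup and invoke the uniqueness in Theorem \ref{thm4}) is a legitimate and logically clean alternative, and the reduction of both sides of \eqref{eq21_2} to non-collision probabilities for the \emph{independent} systems is correct; but the remaining Karlin--McGregor-type identity between the $a$- and $(-a)$-systems killed at the first collision is the entire content of the $n$-point identification and is left unproved in your sketch. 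It does not follow formally from the one-dimensional duality alone — one has to integrate the determinantal kernels by parts over the interlaced region — whereas the paper's route avoids any $n$-point determinant by leaning on the ``stopped process $=$ independent stopped process'' characterization. So: same skeleton, genuinely different (and in principle workable) mechanism for item (2), with the decisive computation still owed in your version.
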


\begin{proof}
Given $\alpha<\beta$ we put $p=\frac{5}{4}$ and define $f(\varepsilon)=C\varepsilon^3,$ where $C$ is found in lemma \ref{lem14_1}.   The theorem \ref{thm1} is applicable, if we take $\varepsilon_n=2^{-n}$ and $\delta_n=\frac{1}{n}.$ Indeed,  $\varepsilon_n,\delta_n\to 0,$  and for large enough $n$ conditions of lemma \ref{lem14_1} are verified:
$$
\lim_{n\to\infty}\frac{f(8\cdot 2^{-n})}{w_{\alpha,\beta}(2^{-n},n^{-1})}\leq 2048 C \lim_{n\to\infty}2^{-n} g^{-1}\bigg(\frac{1}{32n4^n}\bigg)^2=
$$
$$
=2048C\lim_{n\to\infty}\frac{2n\log 4+2\log (32n)}{2^n}=0,
$$
where we used equivalence \eqref{eq19_2}.

We only need to check that finite-point motions of the dual flow are given by transition probabilities of the Arratia flow with the drift $-a.$ Let $p_t(x,y)$ be the transition probability for the one-point motion of the Arratia flow with drift $a,$ i.e.
$$
P^{(1)}_t(x,B)=\int_B p_t(x,y)dy.
$$
By the theorem \ref{thm3} the one-point motion of the dual flow has the transition probability 
$$
\tilde{p}_t(y,x)=-\int^y_{-\infty}\frac{\partial p_t(x,z)}{\partial x}dz.
$$
From this the equality 
$$
\frac{\partial p_t(x,y)}{\partial x}=-\frac{\partial \tilde{p}_t(y,x)}{\partial y}
$$
follows. It is then straightforward to check that 
$$
\frac{\partial \tilde{p}_t(y,x)}{\partial t}=-a(y)\frac{\partial \tilde{p}_t(y,x)}{\partial y}+\frac{1}{2}\frac{\partial^2 \tilde{p}_t(y,x)}{\partial y^2},
$$
i.e. the one-point motion $(\tilde{\psi}_{0,-t}(y))_{t\geq 0}$ of the dual flow is a weak solution of the equation
$$
\begin{cases}
d\tilde{X}(t)=-a(\tilde{X}(t))dt+dW(t) \\
\tilde{X}(0)=y
\end{cases}
$$
Independence before meeting time follows from the representation \eqref{eq21_2} and an analogous property of the forward flow.

\end{proof}

\section{Appendix}

\begin{lemma}
\label{lem_app} Let $\{P_{t}:t\geq 0\}$ be a Feller transition probability on $\mathbb{R}$ such that corresponding Feller process has continuous trajectories. Then for any $c\in\mathbb{R}$  and $t\geq 0$
$$
\lim_{x\to \infty}P_t(x,[c,\infty))=1 \mbox{ and }\lim_{x\to -\infty}P_t(x,(-\infty,c])=1
$$ 

\end{lemma}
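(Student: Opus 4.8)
The plan is to establish the first limit, $\lim_{x\to\infty}P_t(x,[c,\infty))=1$; the second one then follows by applying the first to the process $-X$, which is again a Feller process with continuous trajectories (its transition probability being $(x,B)\mapsto P_t(-x,-B)$). Since $P_0(x,\cdot)=\delta_x$, the case $t=0$ is trivial, so I would assume $t>0$. It suffices to show that $P_t(x,(-\infty,c])\to 0$ as $x\to+\infty$. For a parameter $R>0$ I would split this mass as
\[
P_t(x,(-\infty,c])=P_t(x,[-R,c])+P_t(x,(-\infty,-R)),
\]
and bound the bounded term via the Feller property and the tail term via continuity of trajectories.

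For the bounded term, fix $R$ and pick $f\in C_0(\mathbb{R})$ with $0\leq f\leq 1$, $f\equiv 1$ on $[-R,c]$ and $f\equiv 0$ off $[-R-1,c+1]$. Then $P_t(x,[-R,c])\leq\int f\,dP_t(x,\cdot)=(T_tf)(x)$, where $T_t$ is the associated Feller semigroup; since $T_tf\in C_0(\mathbb{R})$, the right-hand side tends to $0$ as $x\to+\infty$. Hence $\lim_{x\to\infty}P_t(x,[-R,c])=0$ for every fixed $R$.

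For the tail term I would work with the canonical realization $(X(s))_{s\geq 0}$ of the process, with laws $(\mathbb{P}_x)_x$, and combine the strong Markov property with path continuity. For $x>0$ and $R>0$, on the event $\{X(t)\leq-R\}$ the path runs from $x>0$ to a value $\leq-R<0$, so by the intermediate value theorem the hitting time $\tau_0:=\inf\{s:X(s)\leq 0\}$ satisfies $\tau_0<t$, and $X(\tau_0)=0$ by continuity. Applying the strong Markov property at $\tau_0$ and using $\mathbb{P}_0(X(u)\leq-R)\leq\mathbb{P}_0(\inf_{s\leq t}X(s)\leq-R)$ for $u\leq t$, I obtain
\[
\sup_{x>0}\mathbb{P}_x(X(t)\leq-R)\leq\mathbb{P}_0\Big(\inf_{s\leq t}X(s)\leq-R\Big)=:\eta(R).
\]
Since a continuous path attains a finite minimum on $[0,t]$ almost surely, $\eta(R)\downarrow 0$ as $R\to\infty$, and this bound is uniform in $x>0$.

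Combining the two estimates finishes the argument: given $\varepsilon>0$, choose $R$ with $\eta(R)<\varepsilon/2$ and then $x_0$ with $P_t(x,[-R,c])<\varepsilon/2$ for $x\geq x_0$; for such $x$ one gets $P_t(x,(-\infty,c])<\varepsilon$, hence $P_t(x,[c,\infty))>1-\varepsilon$. The only genuinely delicate point is the uniformity in $x$ of the tail estimate: this is exactly where continuity of trajectories enters, ruling out probability mass being transported from $+\infty$ past the level $c$ within a bounded time, and the strong Markov reduction to the single starting point $0$ is what makes that uniformity explicit. Everything else is routine.
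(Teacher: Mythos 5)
Your proof is correct and follows essentially the same route as the paper's: the mass on a bounded window is killed by the Feller property (the paper phrases this as $\lim_{x\to\infty}P_t(x,(d,c))=0$, you via a dominating $C_0$ function), and the far-left tail is controlled uniformly in $x>0$ by the strong Markov property at the first hitting time of $0$ together with continuity of paths. The only cosmetic difference is the order of quantifiers (the paper fixes $\varepsilon$ and then chooses the level $d$, you introduce $\eta(R)\downarrow 0$ first), which changes nothing.
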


\begin{proof} We consider the case $x\to\infty.$ By $\mathbb{P}_x$ we denote the distribution in $C([0,\infty);\mathbb{R})$ of the Feller process $\{X(t):t\geq 0\}$ with initial value $X(0)=x$ and transition probabilities $\{P_t:t\geq 0\}.$ 

Let $\varepsilon>0.$ By continuity of trajectories there exists $d<-|c|$ such that 
$$
\mathbb{P}_0 (\max_{s\in[0,t]}|X(s)|\geq |d|)\leq \varepsilon.
$$
By the Feller property, 
$$
\lim_{x\to\infty}P_t(x,(d,c))=0.
$$
Then 
$$
\limsup_{x\to\infty}P_t(x,(-\infty,c))=\limsup_{x\to\infty}P_t(x,(-\infty,d]).
$$
Let $\tau=\inf\{t\geq 0:X(t)=0\}.$  Since $d<0$ we have for all $x>0$ 
$$
P_t(x,(-\infty,d])=\mathbb{P}_x(X(t)\leq d)=\mathbb{E}_x1_{\tau< t}P_{t-\tau}(0,(-\infty,d])\leq \mathbb{P}_0 (\max_{s\in[0,t]}|X(s)|\geq |d|)\leq \varepsilon.
$$
This proves the convergence $P_t(x,(-\infty,c))\to 0,$ $x\to\infty.$

\end{proof}

\end{document}